\newtheorem{theorem}{Theorem}
\newtheorem{lemma}{Lemma}
\newtheorem{example}{Example}
\newcommand{\Rmnum}[1]{\expandafter\@slowromancap\romannumeral #1@}
\newcommand{\ie}{\emph{i.e.}, }
\newcommand{\qed}{\hfill \ensuremath{\Box}}
\begin{document}
%
\title{Empirical Likelihood Ratio Test with Distribution Function Constraints}
%
%
%

\author{Yingxi~Liu,~\IEEEmembership{Student member,~IEEE,}
        Ahmed~Tewfik,~\IEEEmembership{Fellow,~IEEE}\\
\thanks{Copyright (c) 2012 IEEE. Personal use of this material is permitted. However, permission to use this material for any other purposes must be obtained from the IEEE by sending a request to pubs-permissions@ieee.org.}
\thanks{Y. Liu and A. Tewfik are with the Department of Electrical and Computer Engineering, University of Texas at Austin, Austin, TX, 78712 USA e-mail: yingxi@utexas.edu, tewfik@austin.utexas.edu.}}

\markboth{Paper Draft}%
{Shell \MakeLowercase{\textit{et al.}}: Bare Demo of IEEEtran.cls for Journals}
%



\maketitle

\begin{abstract}
In this work, we study non-parametric hypothesis testing problem with distribution function constraints. The empirical likelihood ratio test has been widely used in testing problems with moment (in)equality constraints. However, some detection problems cannot be described using moment (in)equalities. We propose a distribution function constraint along with an empirical likelihood ratio test. This detector is applicable to a wide variety of robust parametric/non-parametric detection problems. Since the distribution function constraints provide a more exact description of the null hypothesis, the test outperforms the empirical likelihood ratio test with moment constraints as well as many popular goodness-of-fit tests, such as the robust Kolmogorov-Smirnov test and the Cram\'{e}r-von Mises test. Examples from communication systems with real-world noise samples are provided to show their performance. Specifically, the proposed test significantly outperforms the robust Kolmogorov-Smirnov test and the Cram\'{e}r-von Mises test when the null hypothesis is nested in the alternative hypothesis. The same example is repeated when we assume no noise uncertainty. By doing so, we are able to claim that in our case, it is necessary to include uncertainty in noise distribution. Additionally, the asymptotic optimality of the proposed test is provided.

\end{abstract}

\begin{IEEEkeywords}
empirical likelihood, universal hypothesis testing, goodness-of-fit test, robust detection.
\end{IEEEkeywords}

\IEEEpeerreviewmaketitle

\section{Introduction}
This paper proposes a robust hypothesis testing strategy. Likelihood ratio tests are optimal statistical tests for comparing two hypotheses with known statistical descriptions. When the statistical description of one or both hypotheses includes parameters with uncertain values or the data is drawn from a family of probability distributions under one or both hypotheses, one needs to apply robust and/or non-parametric tests. In particular, the test that we propose falls under the category of empirical likelihood ratio tests. A special class of such tests was first studied  by A. Owen \cite{owen1988empirical, owen1990empirical, owen2001empirical} to test the validity of moment equalities.  Robust tests for moment constraints are usually referred to as empirical likelihood ratio tests with moment constraint. The empirical likelihood ratio test with moment constraint (ELRM) is widely used as a tool for non-parametric detection problems in economics. However, the application of the ELRM is largely limited because of the inability of moment constraints to efficiently capture the characteristics of general problems in practice. In particular, the test loses its power when one or both targeted hypotheses cannot be accurately described by moment constraints.

\par
To overcome this shortcoming, our proposed test replaces moment constraints with a constraint on the empirical distribution function (EDF) of the observations. The distribution function constraint is simply a set of EDFs.  Each hypothesis in the problems we consider can be a family cumulative distribution functions (CDFs). A hypothesis can also be described by a bounded region that specifies upper and lower bounds on the EDF covered by the hypothesis.

\subsection{Our contribution}
This work makes three major contributions. Firstly, it proposes a novel non-parametric test ELRDF that handles uncertainties of the distribution function in the hypotheses. Its innovative way of modeling the uncertainty region gives rise to new solutions to a large variety of robust detection problems. It can be extremely useful when a set of EDFs can be observed a priori. For example, in Huber's original robust detection problem \cite{huber1965robust}, the true probability distribution $Q$ is buried in an $\epsilon$-contamination model $Q=(1-\epsilon)P+\epsilon H$, where $P$ is the nominal distribution and $H$ an arbitrary distribution function. The $\epsilon$-contamination model can be treated as a variation of the distribution function constraint.

\par
Secondly, our test improves the performance of robust non-parametric tests. Several tests proposed in the past are applicable to our particular problem, including the robust versions of the Kolmogorov-Smirnov (KS) test \cite{unnikrishnan2010thresholds, massey1951kolmogorov, lilliefors1967kolmogorov} and the Cram\'{e}r-von Mises test \cite{anderson1962distribution}. We study the performance of the ELRDF and the robust KS test and the Cram\'{e}r-von Mises test in several examples in a communication system with real-world non-Gaussian noise data acquired from software-defined radio device. Results show that the ELRDF outperforms the robust KS test and the Cram\'{e}r-von Mises test.

\par
Thirdly, it discusses the asymptotic optimality of the ELRDF in the Hoeffding's sense \cite{hoeffding1965asymptotically}, and provide a proof of that fact following the steps in \cite{zeitouni1991universal, qin1994empirical, canay2010inference, kitamura2012asymptotic}. Also, when there is no uncertainty in the null hypothesis, we show that the ELRDF takes a simple formulation. A sample grouping method is proposed to boost the performance of the test.

\subsection{Related work}
The problem under study in the paper covers a wide range of applications. Several studies in the literature are closely related to this work. As previously discussed, Huber's robust detection problem can be treated as a special case of our problem. In \cite{huber1965robust}, Huber's test features a clipped version of the likelihood ratio test between the nominal densities that delivers performance which minimizes the worst-case probability of false alarm and miss. However, the clipped test has limited application since it requires parametric model for the nominal probability distributions. In \cite{levy2009robust}, the author provides a framework of the robust likelihood ratio test when the uncertainty region is described by the Kullback-Leibler divergence. In the parametric case, this problem is also known as the composite hypothesis testing problem \cite[p.~169]{levy2008principles}. In this case, the generalized likelihood ratio test (GLRT) has optimal error performance when the probability distributions under all the hypotheses are in the same exponential family, c.f. \cite[p.~204]{levy2008principles}, \cite{zeitouni1992generalized}. Again, this test requires complete parametric description of the probability distributions.

\par
Similar problems are also studied in the application of the signal detection in noise with uncertainty \cite{tandra2008snr} or non-Gaussian noise \cite{middleton1979canonical, middleton1999non}. Noise uncertainty considers the case where the test designers do not know the noise statistics perfectly. For example, it might be known that the mean or the variance of the noise falls onto an interval but its exact value may be unknown. Non-Gaussian noise considers the case where the noise statistics is a mixture of Gaussian densities of various means and variances. The distribution function constraint can be applied whenever robustness is needed. Besides applications in signal processing, the proposed test has many practical applications, such as quality assurance in manufacturing, event forecasting, assessment of model fitting in finance, to name just a few.

\par
Another class of close relatives of the ELRDF are the non-parametric goodness-of-fit tests \cite{d1986goodness}. The most popular tests among this class include the previously introduced Kolmogorov-Smirnov (KS) test \cite{massey1951kolmogorov, lilliefors1967kolmogorov}, the Cram\'{e}r-von Mises test \cite{anderson1962distribution}, as well as other tests such as the Anderson-Darling test \cite{anderson1954test}, the Shapiro-Wilk test \cite{shapiro1965analysis} for normally distributed null hypothesis. Specifically, a robust version of the KS test was proposed in \cite{unnikrishnan2010thresholds}, where the distribution function constraint fits perfectly. As a result, this test is the closest competitor to our proposed test. In the test, a worst-case Kolmogorov-Smirnov test statistic is computed and compared to a threshold. Another closely related non-parametric test is the ensemble $\phi$-divergence test \cite{kundargiframework}. This test uses the fact that the $\phi$-divergence computes the difference of two distribution with tunable emphasis on different location of the distribution function \cite{jager2007goodness}, and combine the statistics with different emphases to form a new test. This test is proven to be powerful with non-Gaussian noise. So far, a robust version of it is lacking. Another approach is to use the empirical likelihood ratio test with moment inequalities \cite{canay2010inference, rosen2008confidence}, which is one type of moment constraints. This technique suffers from the problem of insufficient description using moment inequalities. For example, when the unknown hypothesis contains the null hypothesis (nested hypotheses), ELRM performs only slightly better than flipping a fair coin.

\par
The rest of this paper is organized as follows. Section \ref{sec:2} formulates the ELRDF test and discusses its asymptotic optimality. Section \ref{sec:3} presents the formulation of ELRDF when there is no uncertainty. Section \ref{sec:4} discusses several other popular goodness-of-fit tests such as the robust KS test and the Cram\'{e}r-von Mises test, and their formulation in the presence of uncertainty. A real-world noise sample set is studied and two examples from communication systems with those noise samples are provided to show the performance of the ELRDF in Section \ref{sec:5}. Section \ref{sec:6} concludes the paper.

\section{Distribution Function Constrained Detection Problem}\label{sec:2}
\subsection{Problem formulation}
Consider a sequence of observations ${\bf X}^n=\{X_i:i=1, \ldots, n, X_i\in\mathcal{X}\}$ which are independently and identically distributed (i.i.d.) probability density $f$, with cumulative distribution function (CDF) $F$. $\mathcal{X}\subseteq\mathbb{R}$ denotes the sample space. Additionally, the empirical CDF with observations ${\bf X}^n$ is denoted as $F_e$
\begin{align}
    F_e(x, {\bf X}^n)=\frac{1}{n}\sum_{i=1}^n{\bf 1}_{\{X_i\leq x\}},\ x\in\mathbb{R},\nonumber
\end{align}
where ${\bf 1}_{\{\cdot\}}$ is the indicator function. Denote $\mathcal{F}_e=\{F_e(x, {\bf X}^n): {\bf X}^n\in\mathcal{X}^n\}$ as the set of all empirical distribution functions on the $n$-dimensional samples space $\mathcal{X}^n$. In the context where ${\bf X}^n$ is provided, we usually write $F_e(x, {\bf X}^n)$ simply as $F_e(x)$. Given ${\bf X}^n$, the problem of whether $F$ belongs to a certain set of probability densities $\mathcal{F}$ is of interest. This is a universal hypothesis testing problem
\begin{align}\label{eq:basicdetectionprob}
    \mathcal{H}_0:&\ F\in\mathcal{F},\nonumber\\
    \mathcal{H}_1:&\ F\notin\mathcal{F}.
\end{align}
We are particularly interested in the form of $\mathcal{F}$ that is characterized by boundaries of certain CDFs, specifically
\begin{align}\label{eq:boundarycondition}
    \mathcal{F}=\{G:\ F_l(x)\leq G(x)\leq F_u(x)\}.
\end{align}

\subsection{Solution}
Given ${\bf X}^n$, let $\hat{F}$ be a CDF that is absolutely continuous with respect to $F_e$ ($\hat{F}\ll F_e$) and $w_i=\hat{F}(X_i)-\hat{F}(X_i-)$, where $\hat{F}(X_i-)=\lim\limits_{x\rightarrow X_i-}\hat{F}(x)$, which is the value of $\hat{F}(x)$ approaching $X_i$ from the left of the x-axis. Denote $l(F_e)=\prod\limits_{i=1}^n\big(F_e(X_i)$ $-F_e(X_i-)\big)=n^{-n}$ and $l(\hat{F})=\prod\limits_{i=1}^nw_i$. The empirical likelihood ratio is defined as
\begin{align}\label{eq:elr}
    R(\hat{F}, F_e)=\frac{l(\hat{F})}{l(F_e)}.
\end{align}
Naturally, $w_i\geq 0$, $\sum_{i=1}^nw_i=1$. We can rewrite $R(\hat{F}, F_e)$ as a function of $\vec{w}=[w_1, w_2, \ldots, $ $w_n]^T$
\begin{align}
    R(\vec{w}, F_e)=\prod\limits_{i=1}^nnw_i.\nonumber
\end{align}
In the sequel, we use $R(\hat{F}, F_e)$ and $R(\vec{w}, F_e)$ interchangeably depending on the context. It is known that $l(\hat{F})\leq l(F_e)$ for all choices of $\vec{w}$ in the probability simplex \cite[p.~8]{owen2001empirical}. When $w_i=\frac{1}{n}$ for all $i$, $l(\hat{F})=l(F_e)$, then $R(\hat{F}, F_e)\leq 1$. As a first step towards the detection problem, we would like to maximize the empirical likelihood ratio $R(\hat{F}, F_e)$ with respect to $\vec{w}$ when $\hat{F}$ satisfies the boundary conditions
\begin{align}
    \max\limits_{\vec{w}}\Big\{R(\vec{w}, F_e):&w_i\geq 0, \sum_{i=1}^nw_i=1,\nonumber\\
    &F_l(X_i)\leq \hat{F}(X_i)\leq F_u(X_i)\Big\}.\nonumber
\end{align}
We shall assume without loss of generality that $X_1<X_2<\ldots<X_n$. Construct a $(n-1)\times n$ matrix
\begin{align}
    A=\begin{bmatrix}
        1 & 0 & 0 & \ldots & 0 & 0\\
        1 & 1 & 0 & \ldots & 0 & 0\\
        1 & 1 & 1 & \ldots & 0 & 0\\
        \vdots & \ddots & \ddots & \ddots & \vdots & \vdots\\
        1 & \ldots & 1 & 1 & 1 & 0\\
      \end{bmatrix},\nonumber
\end{align}
and let $\vec{F_l}=(F_l(X_1), F_l(X_2), \ldots, F_l(X_{n-1}))^T$, $\vec{F_u}=(F_u(X_1), F_u(X_2), \ldots, F_u(X_{n-1}))^T$. The last constraint is conveniently written as
\begin{align}
    \vec{F_l}\leq A\vec{w}\leq\vec{F_u}.\nonumber
\end{align}
One should notice that $A$ does not contain a row of all ones since the constraint $\sum_{i=1}^nw_i=1$ will certainly contradict the assertion $F_l(X_n)\leq\sum_{i=1}^nw_i\leq F_u(X_n)$. Indeed, one can also drop the constraint $\sum_{i=1}^nw_i=1$ and add an all-one row to the bottom of $A$. We shall see that at this point, it would not make a dramatic difference to favor one alternative over the other. We formally introduce the empirical likelihood with distribution function constraints as follows
\begin{align}\label{eq:eldopt}
    \max\limits_{\vec{w}}\Big\{R(\vec{w}, F_e):&w_i\geq 0, \sum_{i=1}^nw_i=1, \vec{F_l}\leq A\vec{w}\leq\vec{F_u}\Big\}.
\end{align}
This is a problem with a concave objective function (after taking $\log$ operation) and linear constraints. The solution to it is readily available. Let $\vec{w}^{\ast}$ be the maximizer and corresponding CDF as $F^{\ast}$. We build the empirical likelihood ratio test with distribution function constraints on the value of $R(\vec{w}^{\ast}, F_e)$
\begin{align}\label{eq:elrdetector}
    -\frac{1}{n}\log R(\vec{w}^{\ast}, F_e)\overset{\mathcal{H}_1}{\underset{\mathcal{H}_0}{\gtrless}}\eta,
\end{align}
where $\eta\geq 0$. One can immediately identify that $-\frac{1}{n}\log R(\vec{w}^{\ast}, F_e)=D(F_e||\vec{w}^{\ast})$, where $D(\cdot||\cdot)$ is the Kullback-Leibler divergence. The test is to say that when the estimated likelihood is close enough to the empirical distribution, we declare that $\mathcal{H}_0$ is true. Otherwise we declare $\mathcal{H}_1$ true.

\subsection{Asymptotic optimality}
The test \eqref{eq:elrdetector} is essentially a partition of $\mathcal{F}_e$. Denote the partition as $\Lambda(n)=(\Lambda_0(n), \Lambda_1(n))$ where $\mathcal{F}_e=\Lambda_0(n)\cup\Lambda_1(n)$, $\Lambda_1(n)=\Lambda_0^c(n)$, and
\begin{align}
    \Lambda_0(n)=\{F_e:-\frac{1}{n}\log R(F^{\ast}, F_e)\leq\eta\}.\nonumber
\end{align}
We also refer to the partition $\Lambda(n)$ as the test with sample size $n$. We refer to the test simply as $\Lambda$ in the context of asymptotics. Consider an arbitrary test $\Omega(n)=(\Omega_0(n), \Omega_1(n))$ with $\mathcal{F}_e=\Omega_0(n)\cup\Omega_1(n)$, $\Omega_1(n)=\Omega_0^c(n)$. The test declares $\mathcal{H}_0$ true if $F_e\in\Omega_0(n)$. The error performance of the test is characterized by the \emph{worst-case} probability of false alarm and probability of miss
\begin{align}
    P_F&=\sup\limits_{F\in\mathcal{F}}F(F_e\in\Omega_1(n)),\nonumber\\
    P_M&=\sup\limits_{F\notin\mathcal{F}}F(F_e\in\Omega_0(n)).\nonumber
\end{align}
Here, $F(F_e\in\Omega_1(n))$ is the probability that the event $F_e$ belongs to $\Omega_1(n)$ happens when the samples are generated by the distribution $F\in\mathcal{F}$. Hence $F(F_e\in\Omega_1(n))$ is the probability of false alarm. Similarly, $F(F_e\in\Omega_0(n))$ when $F\notin\mathcal{F}$ is the probability of miss. Taking the supremum over all $\mathcal{F}$ or its complementary set yields the worst-case probability of false alarm or miss. In the asymptotic regime, it is customary to study the exponential decay rates of $P_F$ and $P_M$ as the number of samples tends to infinity. Their error exponents are expressed as
\begin{align}
    e_F(\Omega)&=\lim\inf\limits_{n\rightarrow\infty}-\frac{1}{n}\log\sup\limits_{F\in\mathcal{F}}F(F_e\in\Omega_1(n))\nonumber\\
    &=\lim\inf\limits_{n\rightarrow\infty}\inf\limits_{F\in\mathcal{F}}-\frac{1}{n}\log F(F_e\in\Omega_1(n)),\nonumber
\end{align}
and
\begin{align}
    e_M(\Omega)&=\lim\inf\limits_{n\rightarrow\infty}-\frac{1}{n}\log\sup\limits_{F\notin\mathcal{F}}F(F_e\in\Omega_0(n))\nonumber\\
    &=\lim\inf\limits_{n\rightarrow\infty}\inf\limits_{F\notin\mathcal{F}}-\frac{1}{n}\log F(F_e\in\Omega_0(n)).\nonumber
\end{align}
To characterize the asymptotic properties of the error exponents of $P_F$ and $P_M$, we need to define a special partition of $\Omega(n)$ as follows
\begin{align}\label{eq:smoothing}
    \Omega_1^{\delta}(n)=\bigcup\limits_{\mu\in\Omega_1(n)}B(\mu, \delta),
\end{align}
and
\begin{align}
    \Omega_0^{\delta}(n)=\mathcal{F}_e\setminus\Omega_1^{\delta}(n),\nonumber
\end{align}
where $B(\mu, \delta)$ denotes an open ball centered at $\mu$ with radius $\delta$ equipped with the Levy metric. For convenience, let $\Omega^{\delta}(n)=(\Omega_0^{\delta}(n), \Omega_1^{\delta}(n))$. The test $\Lambda$ is optimal in the Hoeffding's sense \cite{hoeffding1965asymptotically, zeitouni1991universal}. This result is summarized in the following theorem.
\begin{theorem}\label{thm1}
Consider the test $\Lambda$ such that
\begin{align}
    \Lambda_1=\{F_e:-\frac{1}{n}\log R(F^{\ast}, F_e)>\eta\}.\nonumber
\end{align}
Then 1) and 2) are true:
\begin{enumerate}
  \item $e_F(\Lambda)\geq\eta$.
  \item If an alternative test $\Omega$ satisfies
  \begin{align}
    \lim\inf\limits_{n\rightarrow\infty}\inf\limits_{F\in\mathcal{F}}-\frac{1}{n}\log F(F_e\in\Omega_1^{\delta}(n))>\eta\nonumber
  \end{align}
  for some $\delta>0$, then
  \begin{align}
    &\lim\inf\limits_{n\rightarrow\infty}\inf\limits_{F\notin\mathcal{F}}-\frac{1}{n}\log F(F_e\in\Omega_0(n))\nonumber\\
    &\leq\lim\inf\limits_{n\rightarrow\infty}\inf\limits_{F\notin\mathcal{F}}-\frac{1}{n}\log F(F_e\in\Lambda_0).\nonumber
  \end{align}
\end{enumerate}
\end{theorem}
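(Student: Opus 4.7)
The plan is to recast the theorem in the language of Sanov's theorem and Kullback--Leibler projections. The key identity $-\tfrac{1}{n}\log R(\hat F,F_e)=D(F_e\|\hat F)$ shows that $F^{\ast}$ is the $I$-projection of $F_e$ onto the constraint polytope in \eqref{eq:eldopt}, so $\Lambda_0$ is essentially a KL-neighborhood of radius $\eta$ around $\mathcal{F}$, and the worst-case probabilities of false alarm and miss translate into infima of KL divergences via Sanov's theorem on the space of CDFs endowed with the Levy topology, along the lines of the universal hypothesis testing framework of \cite{zeitouni1991universal}.

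For part 1, I would first identify the limit of $\Lambda_1(n)$, in the large-deviations sense, as the closed set $\overline{\Lambda}_1=\{G:\inf_{H\in\mathcal{F}}D(G\|H)\geq\eta\}$. Sanov's upper bound then gives, for every $F\in\mathcal{F}$, $\liminf_n-\tfrac{1}{n}\log F(F_e\in\Lambda_1(n))\geq\inf_{G\in\overline{\Lambda}_1}D(G\|F)$. Because $F\in\mathcal{F}$, every $G\in\overline{\Lambda}_1$ satisfies $D(G\|F)\geq\inf_{H\in\mathcal{F}}D(G\|H)\geq\eta$, and this bound survives taking the infimum over $F\in\mathcal{F}$, yielding $e_F(\Lambda)\geq\eta$.

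For part 2, I would combine the hypothesis on $\Omega$ with Sanov's upper bound applied to the open $\delta$-smoothing $\Omega_1^{\delta}(n)$, obtaining $\inf_{G\in\Omega_1^{\delta}}D(G\|F)\geq\eta$ for every $F\in\mathcal{F}$, where $\Omega_1^{\delta}$ denotes the limiting collection of accumulation points. This forces $\Omega_1^{\delta}$ to be disjoint from the open KL-ball $\{G:\inf_{H\in\mathcal{F}}D(G\|H)<\eta\}$, which is exactly the interior of the limiting $\Lambda_0$. Hence $\Lambda_0\subseteq\Omega_0^{\delta}\subseteq\Omega_0$, and for any $F\notin\mathcal{F}$, set inclusion gives $\inf_{G\in\Omega_0}D(G\|F)\leq\inf_{G\in\Lambda_0}D(G\|F)$; Sanov's theorem together with the infimum over $F\notin\mathcal{F}$ then converts this into the stated inequality between the miss exponents of $\Omega$ and $\Lambda$, which is the Hoeffding-type optimality statement.

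The main obstacle is reconciling the discrete nature of the empirical likelihood, whose feasibility set in \eqref{eq:eldopt} controls $\hat F$ only at the $n$ observed sample points, with the continuous CDF description of $\mathcal{F}$ in \eqref{eq:boundarycondition}. Passing from the finite-sample $\Lambda_0(n)$ to a limiting KL-neighborhood of $\mathcal{F}$ requires a careful closure/interior argument in the Levy metric, which is precisely why the $\delta$-smoothing in \eqref{eq:smoothing} is built into the hypothesis of part 2. A secondary, more routine, technicality is verifying that Sanov's theorem holds on the space of CDFs under the Levy topology, that $\mathcal{F}$ is closed under weak convergence, and that the $I$-projection $F^{\ast}$ is continuous in $F_e$; once these regularity issues are handled as in \cite{zeitouni1991universal, qin1994empirical}, both steps above become routine.
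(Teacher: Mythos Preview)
Your approach is essentially the paper's: both parts rest on the identification $-\tfrac{1}{n}\log R(F^{\ast},F_e)=\inf_{H\in\mathcal{F}}D(F_e\|H)$ together with Sanov's theorem in the Levy topology, exactly as in \cite{zeitouni1991universal}. Part~1 matches the paper almost verbatim.

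For part~2, however, your sketch deviates from the paper in a way that hides the one genuinely delicate step. You propose to apply Sanov to a ``limiting collection of accumulation points'' $\Omega_1^{\delta}$ and then read off $\Lambda_0\subseteq\Omega_0^{\delta}\subseteq\Omega_0$. The difficulty is that $\Omega_1^{\delta}(n)$ varies with $n$, and Sanov's theorem does not apply directly to such a moving target; defining a limiting set (Kuratowski $\limsup$, set of accumulation points, etc.) does not by itself yield the inequality $\inf_{G\in\Omega_1^{\delta}}D(G\|F)\geq\eta$ you need. Also, the relevant bound here is the large-deviations \emph{lower} bound (for open sets), not the upper bound you mention. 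The paper (following Zeitouni--Gutman) avoids this by a contradiction argument: assume $\Lambda_0\not\subseteq\Omega_0(n)$ along a subsequence, extract $\mu_{n_k}\in\Lambda_0\cap\Omega_1(n_k)$, use compactness of $\Lambda_0$ to get a limit $\mu\in\Lambda_0$, and then the $\delta$-fattening guarantees a \emph{fixed} open ball $B(\mu,\tfrac{\delta}{2})\subset\Omega_1^{\delta}(n_k)$ for infinitely many $k$. Sanov's lower bound on this fixed ball gives $\liminf-\tfrac{1}{n}\log F(F_e\in\Omega_1^{\delta}(n))\leq D(\mu\|F)\leq\eta$ for $F\in\mathcal{F}$, contradicting the hypothesis on $\Omega$. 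This compactness-plus-fixed-ball maneuver is precisely the content you gesture at with ``careful closure/interior argument,'' and it is what makes the proof go through; your limiting-set formulation would need to be rewritten in these terms.
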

One should first notice that the test is asymptotically consistent: when the null hypothesis is true, $P\{F_e\in\Lambda_0\}\overset{n\rightarrow\infty}\longrightarrow 1$. This is true according to Glivenko-Cantelli theorem \cite{dudley1999uniform, lehmann2005testing}: the empirical distribution uniformly converges to the true distribution. We provide the proof of Theorem \ref{thm1} in Appendix \ref{app1}.

\section{Degenerate distribution function constraint}\label{sec:3}
When $\mathcal{F}=\{F\}$ is simple, we say that the distribution function constraint is degenerate. In this case, it is necessary to replace $\sum_{i=1}^nw_i=1$ by $\sum_{i=1}^nw_i=F(X_n)$ in problem \eqref{eq:eldopt}. With the degenerate constraint, the values of $w_i$'s are fixed
\begin{align}
    w_1&=F(X_1),\nonumber\\
    w_i&=F(X_i)-F(X_{i-1}),i=2,3,\ldots,n.\nonumber
\end{align}
Here we assume without loss of generality that $X_1<X_2<\ldots<X_n$. According to probability integral transformation theorem \cite[p.~108]{lehmann2005testing}, $F(X)$ is uniformly distributed over $[0,1]$ when $X$ is drawn from $F$. Denote $W_i$'s as the random variables associated to the $w_i$'s. We have the following result regarding the statistics of $W_i$'s.
\begin{lemma}\label{lm1}
$W_i$'s are identically distributed $\forall i$ with distribution function
\begin{align}
    f_i(w_i)=n(1-w_i)^{n-1},\nonumber
\end{align}
and $W_i\overset{d.}{\longrightarrow}\frac{1}{n+1}$.
\end{lemma}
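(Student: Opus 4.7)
The plan is to reduce the problem to uniform spacings via the probability integral transformation, and then exploit the well-known Dirichlet structure of those spacings. First I would set $U_i = F(X_i)$; by the probability integral theorem cited just before the lemma, the $U_i$ are i.i.d.\ Uniform$(0,1)$, and since $F$ is monotone, the ordering of the $X_i$ matches that of the $U_i$. Writing $U_{(1)} < U_{(2)} < \cdots < U_{(n)}$ for the uniform order statistics, I get $W_1 = U_{(1)}$ and $W_i = U_{(i)} - U_{(i-1)}$ for $i \geq 2$, so that $(W_1,\ldots,W_n)$ is exactly the first block of uniform spacings.

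The key step is to introduce the auxiliary spacing $W_{n+1} \triangleq 1 - U_{(n)}$ and observe that $(W_1,\ldots,W_{n+1})$ lies on the probability simplex $\{w \in \mathbb{R}^{n+1}: w_i \geq 0, \sum w_i = 1\}$. Starting from the joint density $n!$ of the uniform order statistics on the ordered simplex and applying the change of variables $(U_{(1)},\ldots,U_{(n)}) \mapsto (W_1,\ldots,W_n)$, whose Jacobian is $1$, I would show that $(W_1,\ldots,W_{n+1})$ is uniformly distributed on the $n$-dimensional probability simplex, i.e.\ Dirichlet$(1,1,\ldots,1)$. Exchangeability of this Dirichlet vector immediately yields that all the $W_i$ are identically distributed, which removes the apparent asymmetry between $W_1$ and $W_i$ for $i \geq 2$. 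Marginalizing any one coordinate by integrating a constant density over the $(n-1)$-simplex of side $1 - w_i$, whose Lebesgue volume is $(1-w_i)^{n-1}/(n-1)!$, then gives the stated density $f_i(w_i) = n(1-w_i)^{n-1}$, i.e.\ Beta$(1,n)$.

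For the convergence claim, I would use the Beta$(1,n)$ moments directly: $E[W_i] = 1/(n+1)$ and $\mathrm{Var}(W_i) = n/((n+1)^2(n+2)) = O(1/n^2)$. Chebyshev's inequality then gives $W_i - 1/(n+1) \to 0$ in probability, which I interpret as the $W_i \overset{d}{\to} 1/(n+1)$ statement of the lemma.

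The only genuinely non-routine point is justifying that $W_1$, which is a single order statistic, has the same law as the interior spacings $W_i$, $i \geq 2$. The augmentation trick above is what makes this transparent; without the auxiliary coordinate $W_{n+1}$, the symmetry between the boundary and interior spacings is obscured, and one would otherwise need a separate direct computation of the density of $W_1$ and of $W_i$ to verify identical distribution. Everything else reduces to standard simplex volume and Beta-moment calculations.
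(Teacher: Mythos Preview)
Your argument is correct and takes a genuinely different route from the paper. The paper proceeds by direct computation: for each $i\geq 2$ it writes the density of $W_i$ as the integral
\[
f_i(w_i)=\frac{n!}{(n-2)!}\binom{n-2}{i-2}\int_0^{1-w_i}x^{i-2}(1-w_i-x)^{n-i}\,dx,
\]
evaluates the integral by an integration-by-parts recursion $A_{i+1}=\tfrac{i-1}{n-i}A_i$ with base case $A_2=\tfrac{1}{n-1}(1-w_i)^{n-1}$, substitutes back to obtain $n(1-w_i)^{n-1}$, and then handles $i=1$ separately. Your approach instead recognizes $(W_1,\ldots,W_n,1-U_{(n)})$ as the full vector of uniform spacings, invokes its Dirichlet$(1,\ldots,1)$ law, and reads off both the identical-distribution claim (from exchangeability) and the Beta$(1,n)$ marginal in one stroke. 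What you gain is a structural explanation of why the boundary spacing $W_1$ and the interior spacings share a law, without any case split or recursion; what the paper's computation gains is self-containment, requiring no prior knowledge of the Dirichlet distribution. Your treatment of the convergence claim via the Beta variance and Chebyshev is also more explicit than the paper's, which simply records $\mathbb{E}[W_i]=1/(n+1)$.
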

\begin{proof}
See Appendix \ref{app2}.
\end{proof}
With this result, the test statistics
\begin{align}
    -\frac{1}{n}\log R(\vec{w}^{\ast}, F_e)\overset{d.}{\longrightarrow}\log(1+\frac{1}{n}),\nonumber
\end{align}
when the null hypothesis is true. This gives us information about how to design the test statistic when the distribution constraint is degenerate. Indeed, a test can be built as follows. Divide the $n$ samples into $k$ small groups of $m$ samples each. Ensure that the samples are randomly selected. The samples are relabeled to as $X_{ij}$, $i=1, 2, \ldots, k$, $j=1, 2, \ldots, m$. In group $i$, order the samples such that $X_{i1}<X_{i2}<\ldots<X_{im}$. Then compute the likelihoods
\begin{align}
    w_{i1}&=F(X_{i1}),\nonumber\\
    w_{ij}&=F(X_{ij})-F(X_{i,j-1}),j=2,3,\ldots,n.\nonumber
\end{align}
Next, average them over $k$ groups
\begin{align}
    \tilde{w}_j=\frac{1}{k}\sum_{i=1}^kw_{ij}.\nonumber
\end{align}
Then $-\frac{1}{m}\log R(\vec{\tilde{w}}^{\ast}, F_e)\overset{k\rightarrow\infty}{\longrightarrow}\log(1+\frac{1}{m})$. In this setting, the test statistic converges much faster than without grouping.

\section{Other Robust Goodness-of-fit Tests}\label{sec:4}
The problem \eqref{eq:basicdetectionprob} is of wide interest as a parametric and non-parametric detection problem. Different solutions have been proposed in the past to address several variations of the problem. For example, in the parametric case, the generalized likelihood ratio test (GLRT) is optimal when the unknown alternative hypothesis is in the same exponential family of the null hypothesis \cite{zeitouni1992generalized}. In the non-parametric case, the empirical likelihood ratio test with moment constraints (ELRM) was proposed to test moment conditions. Some other goodness-of-fit tests might also be applicable, such as the robust KS test and Cram\'{e}r-von Mises test.

The empirical likelihood ratio test with moment constraints is closely related to our proposed detector in that the ELRM also aims to maximize the empirical likelihood ratio. The main difference is that the maximization in ELRM is taken with constraints on the moments rather than distribution functions. Let $g$ be a moment function on the random variable $X_i$'s. The associated moment is $\sum_{i=1}^nw_ig(X_i)$. In ELRM, testing whether the moment of the null hypothesis falls into certain region is of interest. For example, one can consider testing the null hypothesis that the moment is bounded in a scalar interval $l\leq\sum_{i=1}^nw_ig(X_i)\leq u$. Similarly, the following optimization problem is considered
\begin{align}\label{eq:elmopt}
    \max\limits_{\vec{w}}\Big\{\prod\limits_{i=1}^nnw_i:&w_i\geq 0, \sum_{i=1}^nw_i=1,\nonumber\\
    &u\leq \sum_{i=1}^nw_ig(X_i)\leq l\Big\}.
\end{align}
The difference between \eqref{eq:eldopt} and \eqref{eq:elmopt} is in the constraint applied. Because of its constraints, the ELRM has its unique application in financial engineering and economics. For problems to test the validity of moment constraints as specified in \eqref{eq:elmopt}, the detector also enjoys the asymptotic optimality in the Hoeffding's sense.

\par
In general however, the ELRM is unable to succinctly capture the uncertainty in the underlying probability cumulative function. Arbitrary cumulative distribution functions can be completely described only with an infinite number of moments. An infinite or very large number of moment constraints may be needed to capture the region constraint defined in \eqref{eq:boundarycondition}. In such general settings, the proposed method will be more practical than the ELRM.


\subsection{Robust Kolmogorov-Smirnov test}
The Kolmogorov-Smirnov test is a popular non-parametric test. Its robust version is directly applicable to our problem with distribution function constraint $\mathcal{F}$ in \eqref{eq:boundarycondition}. The test statistic of the robust KS test has the following form
\begin{align}\label{eq:robksstat}
    D_n=\inf\limits_{F\in\mathcal{F}}\sup\limits_{x}|F_e(x)-F(x)|.
\end{align}
The test compares the statistic with a constant
\begin{align}\label{eq:ksdetector}
    \sqrt{n}D_n\overset{\mathcal{H}_1}{\underset{\mathcal{H}_0}{\gtrless}}\gamma.
\end{align}
When $\mathcal{F}=\{F\}$ is simple, $\sqrt{n}D_n\overset{d}{\longrightarrow}\sup\limits_{t\in[0,1]}|B(t)|$, where $B(t)$ is the Brownian bridge \cite[p.~585]{lehmann2005testing}. Denote the probability of false alarm as $P_F^{\text{KS}}(\gamma)=Pr\Big \{\sup\limits_{t\in[0,1]}|B(t)|\geq\gamma\Big \}$ for simple $\mathcal{F}$. When $\mathcal{F}$ has the form in \eqref{eq:boundarycondition}, it is verified in \cite{unnikrishnan2010thresholds} that the probability of false alarm of the robust version of KS test $P_F^{\text{RKS}}$ satisfies
\begin{align}
    P_F^{\text{KS}}(\gamma)<P_F^{\text{RKS}}(\gamma)<2P_F^{\text{KS}}(\gamma).\nonumber
\end{align}

\subsection{Cram\'{e}r-von Mises criterion}
Since the KS test measures the ``distance'' between the ECDF and the hypothesized distribution, it belongs to a class of ECDF statistic. Another class of ECDF statistic is the Cram\'{e}r-von Mises family of statistics
\begin{align}
    w^2=\int_{-\infty}^{+\infty}[F_e(x)-F(x)]^2\psi(x)dF(x).\nonumber
\end{align}
Taking $\psi(x)=1$ yields the Cram\'{e}r-von Mises test while taking $\psi(x)=\{F(x)[1-F(x)]\}^{-1}$ yields the Anderson-Darling test. For a discussion of the difference of these two test readers are referred to \cite{kundargiframework, jager2007goodness} for an in-depth discussion. The Cram\'{e}r-von Mises statistic can be further simplified as
\begin{align}
    T_n=nw^2=\frac{1}{12n}+\sum_{i=1}^n\Big[\frac{2i-1}{2n}-F(X_i)\Big]^2.\nonumber
\end{align}
This test rejects the null hypothesis for large value of $T_n$. Considering that $F\in\mathcal{F}$, the robust version of this statistic is
\begin{align}\label{eq:cvmstat}
    T_n^{\text{rob}}=\inf\limits_{F\in\mathcal{F}}\Big\{\frac{1}{12n}+\sum_{i=1}^n\Big[\frac{2i-1}{2n}-F(X_i)\Big]^2\Big\}.
\end{align}
It is obtained by solving the following optimization problem
\begin{align}
    \min\limits_{\vec{F}}\Big\{\sum_{i=1}^n\Big[\frac{2i-1}{2n}-F_i\Big]^2:\vec{F_l}\leq \vec{F}\leq\vec{F_u}, B\vec{F}\geq 0\Big\},\nonumber
\end{align}
where $\vec{F_l}=[F_l(X_1), F_l(X_2),\ldots,F_l(X_n)]^T$, $\vec{F_u}=[F_u(X_1), F_u(X_2),\ldots,F_u(X_n)]^T$, $\vec{F}=[F_1, F_2,$ $\ldots, F_n]^T$, and $B$ is a $(n-1)\times n$ matrix
\begin{align}
    B=\begin{bmatrix}
        -1 & 1 & 0 & \ldots & 0 \\
        0 & -1 & 1 & \ldots & 0 \\
        \vdots & \ddots & \ddots & \ddots & \vdots \\
        0 & \ldots & 0 & -1 & 1 \\
      \end{bmatrix}.\nonumber
\end{align}
The Anderson-Darling test can also be simplified. However, it turns out that its statistic cannot accommodate the robustness requirement. The comparison of the ELRDF, robust KS test and robust Cram\'{e}r-von Mises test will be discussed with examples in the next section.

\section{Examples}\label{sec:5}
In this section, we consider several examples from communication systems where the noise distribution is not perfectly known. In fact, this happens quite frequently in many applications. Firstly, in practice, the noise parameters cannot be known with good precision. Secondly, it is highly possible that other signal sources in the environment might contribute to the noise component. Here, we study the examples with real-world noise samples acquired from a software-defined radio device operating on the 2.49 GHz frequency band. Due to many effects, such as the environment interference, and the imperfections of the hardware, the noise is not perfectly Gaussian nor stationary. To examine whether the noise follows a stationary Gaussian distribution, we consider the KS test for normality \cite[p.~589]{lehmann2005testing}
\begin{align}
    D_n^{\text{normality}}=\sup\limits_{x}|F_e(x)-\Phi(\frac{x-\hat{m}_n}{\hat{\sigma}_n})|,\nonumber
\end{align}
where $(\hat{m}_n, \hat{\sigma}_n)$ is the maximum likelihood estimator for the Gaussian mean and variance with ${\bf X}^n$, and $\Phi(\cdot)$ is the CDF of standard normal. The KS normality test implies that if the noise distribution is Gaussian, its corresponding measure $D_n^{\text{normality}}$ viewed as a random variable should have the same CDF as that of the simulated Gaussian. This holds for any sample size and any Gaussian distribution with any mean and variance. In this study, we use 5 millions real-world noise samples. The CDFs of $D_n^{\text{normality}}$ are plotted in Figure \ref{fig:normalitytest} where we compare the statistics generated by simulated Gaussian noise and the real-world noise samples. In Figure \ref{fig:normality10}, it is observed that the two CDFs agree perfectly with each other. However, when the sample size increases, the two begin to diverge as shown in Figure \ref{fig:normality50} to \ref{fig:normality500}. The $D_n^{\text{normality}}$ measure with real-world noise stochastically dominates the one with simulated Gaussian noise for large sample size. This indicates that the KS normality test has a good chance to separate the real-world noise from Gaussian noise. There is one way to interpret this phenomenon. It is possible that small numbers of consecutively collected noise samples do follow the same Gaussian distribution. But this distribution changes over time. When larger numbers of samples are examined, they follow a mixture of Gaussian distribution rather than a Gaussian distribution which leads to the failure of the normality test for large sample sizes. Due to this fact, it is not recommended to model the sample we study as Gaussian or assume any stationarity for it.
\begin{figure*}[!t]
  \centering
  \subfloat[sample size 10]{\label{fig:normality10}\includegraphics[width=3.0in]{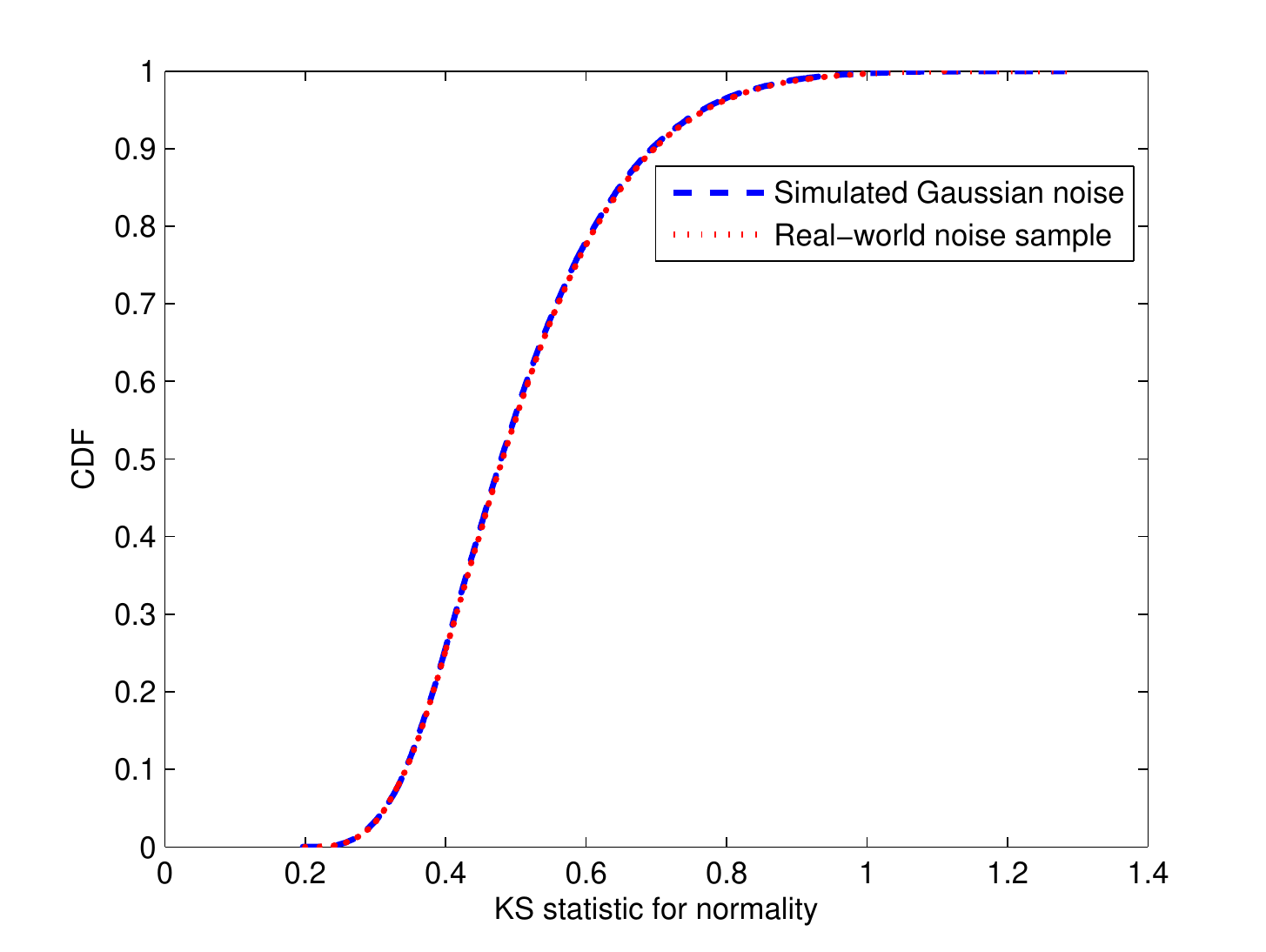}}
  \subfloat[sample size 50]{\label{fig:normality50}\includegraphics[width=3.0in]{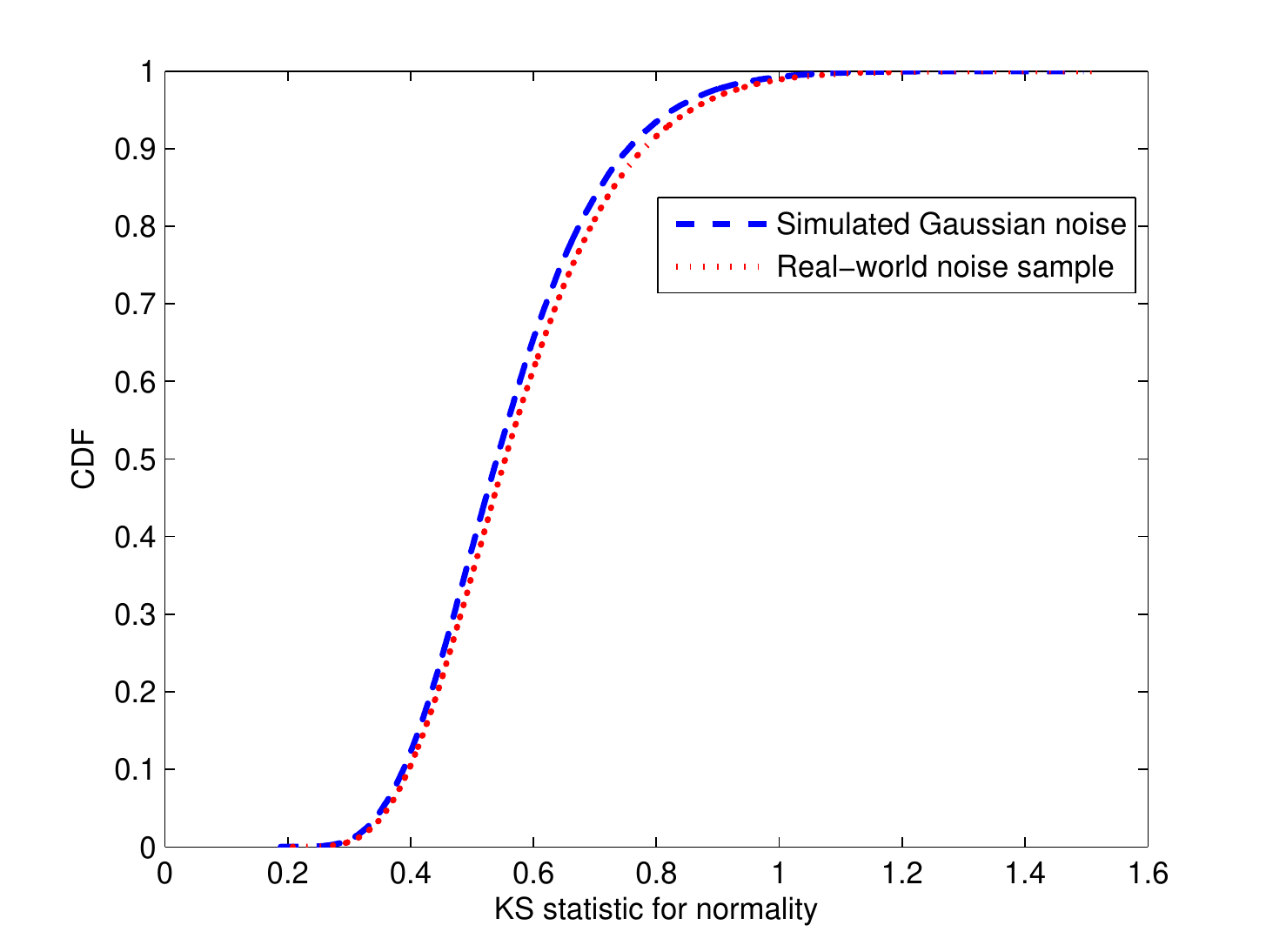}}\\
  \subfloat[sample size 100]{\label{fig:normality100}\includegraphics[width=3.0in]{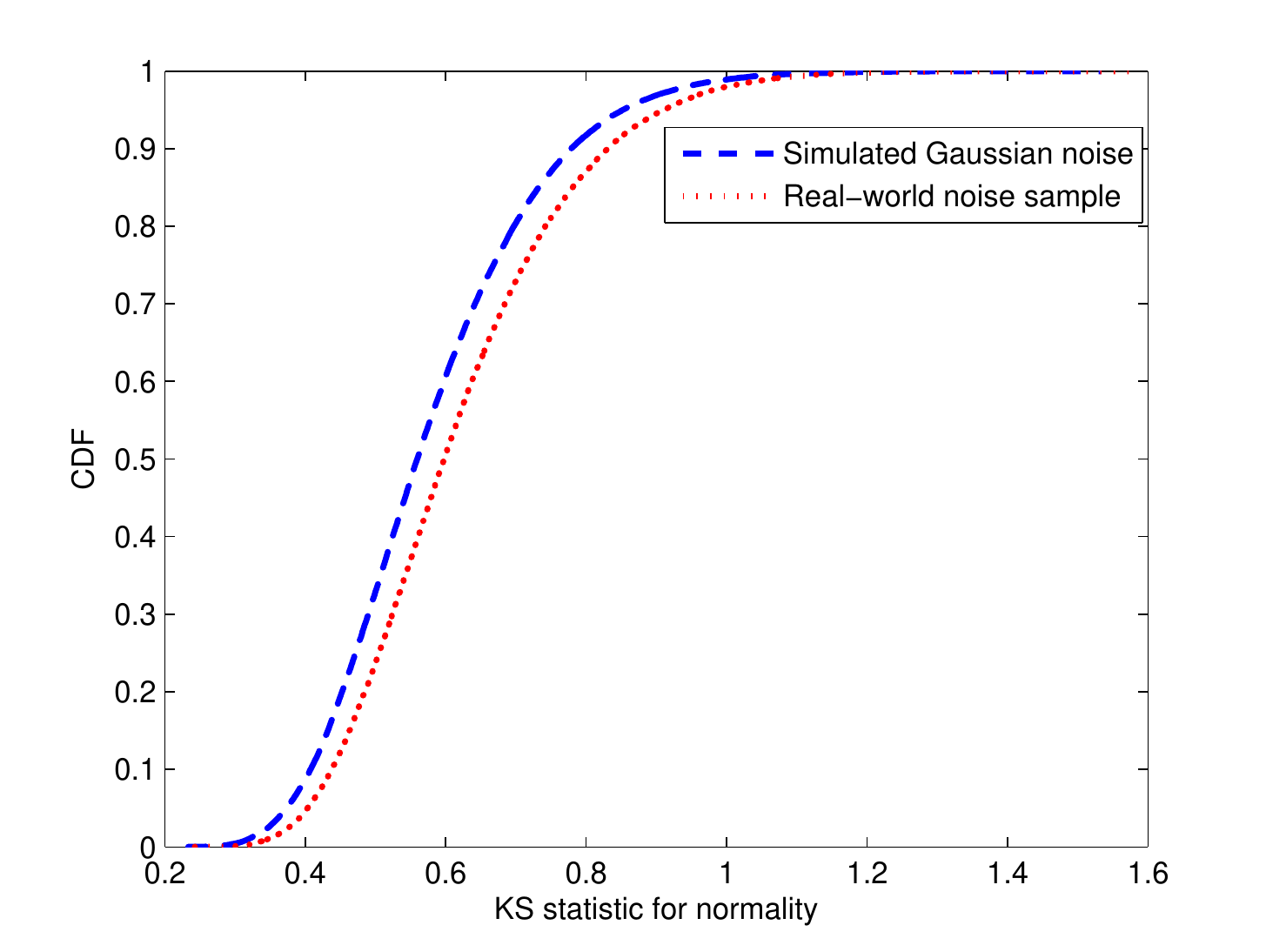}}
  \subfloat[sample size 500]{\label{fig:normality500}\includegraphics[width=3.0in]{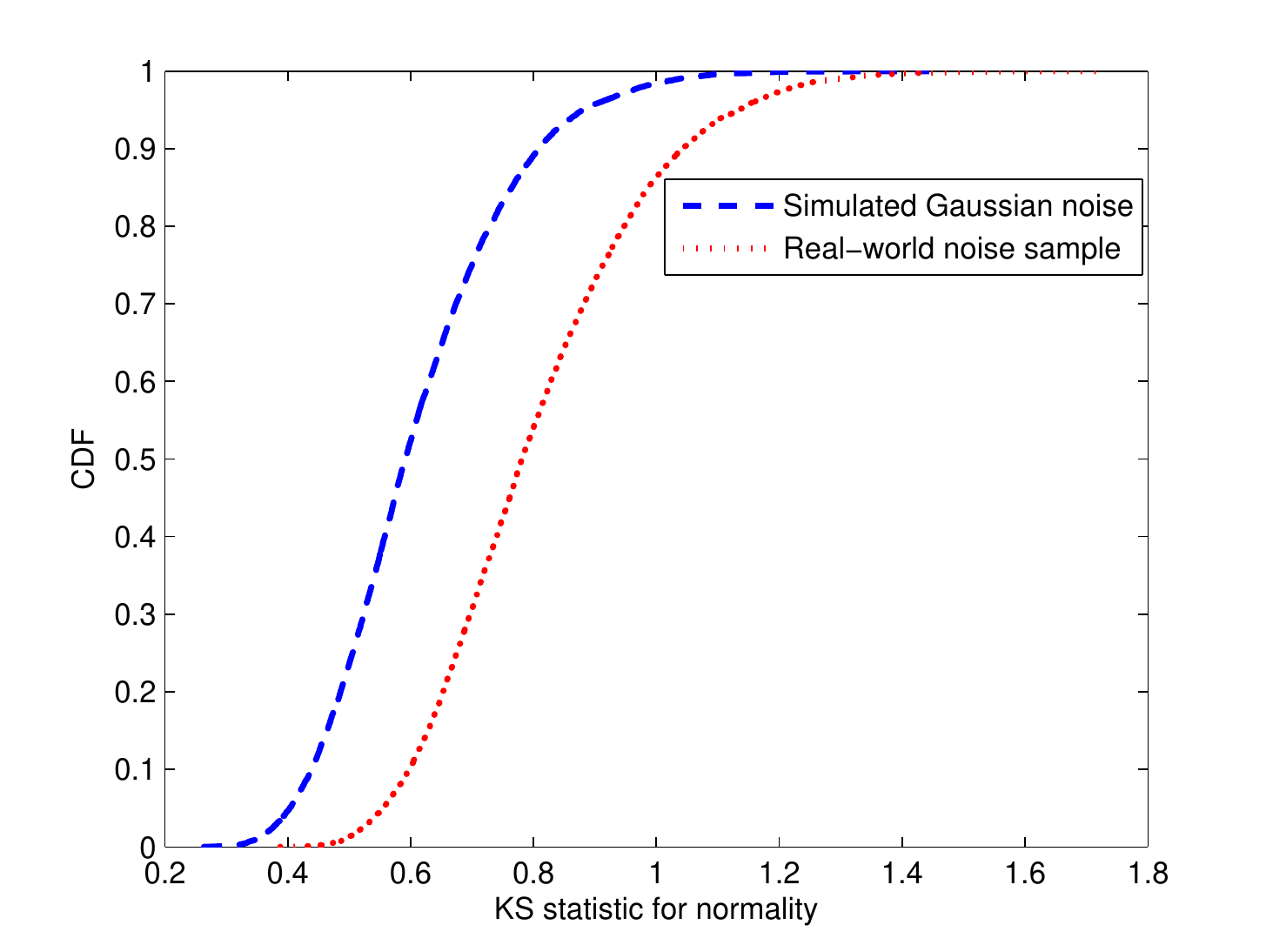}}
  \caption{CDFs of the KS statistics for normality test generated by simulated Gaussian noise and real-world noise samples.}
  \label{fig:normalitytest}
\end{figure*}

\par
We therefore describe the noise distribution in a non-parametric form. We first need a description of the boundary condition \eqref{eq:boundarycondition}. The 5 million samples are divided into small groups of equal number of samples. The empirical cumulative distribution function (ECDF) is computed for each sample group. Then the upper and lower bounds of these ECDFs are extracted as $F_u$ and $F_l$. We consider the group size of 100. The empirical uncertainty region of noise sample distribution is plotted in Figure \ref{fig:ulbound}. Notice that there is no particular reason to set the group size to 100. Indeed, with a larger group size, one can obtain a narrower uncertainty region. Regarding the detector performance, this results in larger probability of detection, but also larger probability of false alarm. Indeed, it will be shown that in general uncertainty region is necessary with the samples we study. This uncertainty region is used in all the upcoming examples.
\begin{figure}[!t]
    \centering
    \includegraphics[width=3.5in]{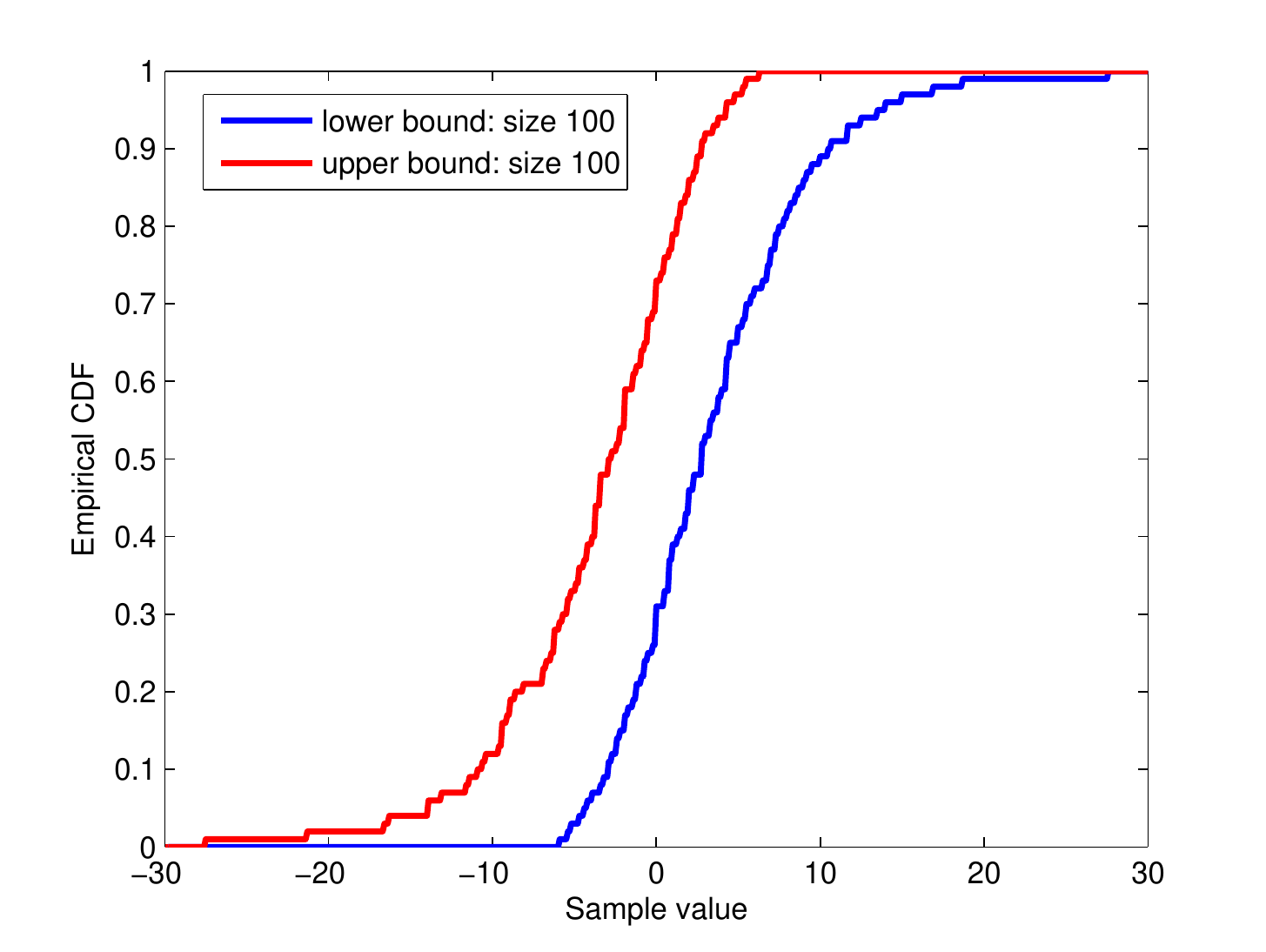}
    \caption{Uncertainty region of experimental noise samples.}
    \label{fig:ulbound}
\end{figure}

\begin{example}[Rich distribution function constraint]
Consider a communication system in which $n$ copies of a binary signal $X$ pass through a channel sequentially at times $i=1,2,\ldots,n$, with gain $h$ and are received with additive noise $v_i$
\begin{align}
    Y_i=h_iX+v_i, i=1,2,\ldots,n.\nonumber
\end{align}
Firstly, we consider the case that the channel is in slow fading, which means that the channel gain $h_i$ is a constant but unknown during the $n$ transmissions. The noise distribution belongs to the region specified in Figure \ref{fig:ulbound}. Without a parametric model of the noise distribution and with no information of the alternative hypothesis, one usually resorts to the goodness-of-fit tests. The performance of ELRDF is compared with the robust Cram\'{e}r-von Mises test and robust Kolmogorov-Smirnov test. For $h_i=3$, their ROC curves with sample size 10 are plotted in Figure \ref{fig:slowfadingp}. It is observed that the robust Cram\'{e}r-von Mises test outperforms the other two tests, including ELRDF. The ELRDF performs slightly better than the robust KS test. But with $h_i=-3$, both the ELRDF and robust KS test significantly outperform the Cram\'{e}r-von Mises test. The reason that the performances differ for $h_i=3$ and $-3$ can be explained as follows. In ELRDF, the objective is being maximized. As a result, the maximizing CDF of ELRDF is being ``pulled'' to the left as much as possible, while the Cram\'{e}r-von Mises test tries to stay at the center, as shown in Figure \ref{fig:singlerun}. When the alternative hypothesis is true, the maximizing CDF of ELRDF is further away than that of the Cram\'{e}r-von Mises test. Due to this fact, the Cram\'{e}r-von Mises test performs better. When the alternative hypothesis is to the left of the null hypothesis, the opposite happens, \ie the ELRDF performs better.

\par
In the fast fading scenario, the null and alternative hypotheses are nested. In the $n$ transmissions, the channel gain $h_i$, $i=1,2,\ldots,n$ is i.i.d. with uniform distribution in $[-10, 10]$. Their ROC curves are shown in Figure \ref{fig:fastfading}. In this case, ELRDF outperforms the other tests.
\begin{figure*}[!t]
  \centering
  \subfloat[$h_i=3$]{\label{fig:slowfadingp}\includegraphics[width=3.5in]{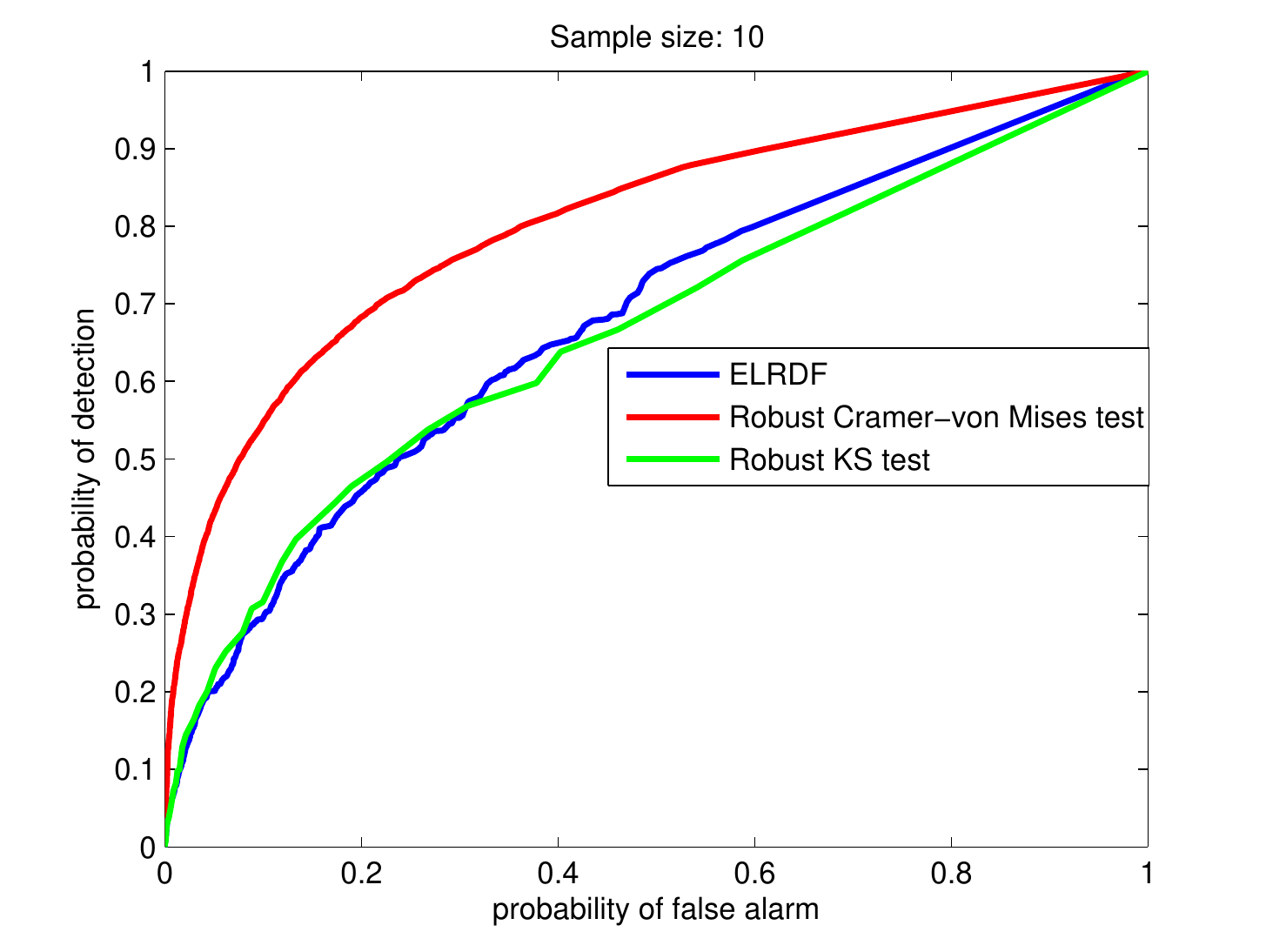}}
  \subfloat[$h_i=-3$]{\label{fig:slowfadingn}\includegraphics[width=3.5in]{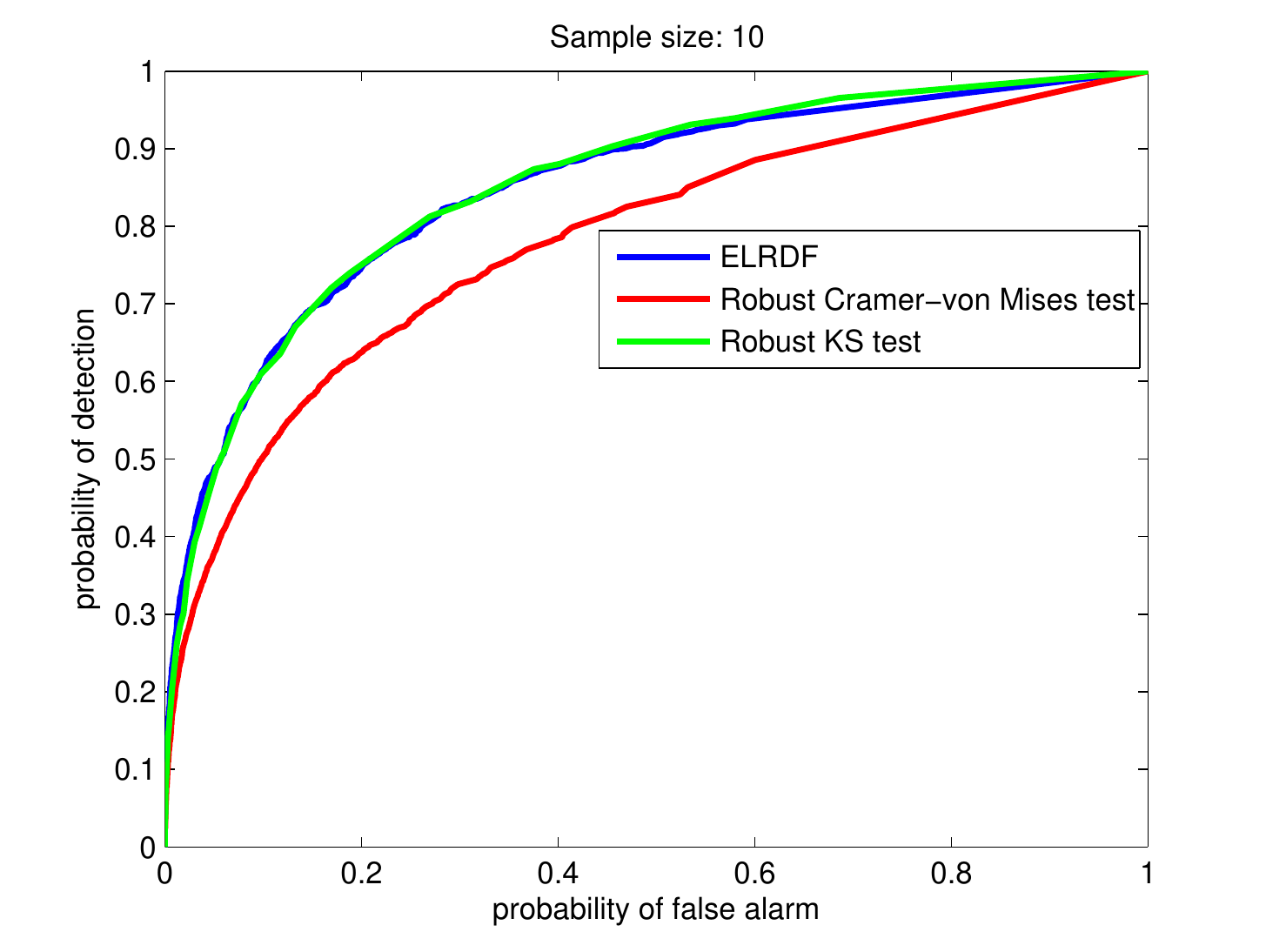}}
  \caption{ROC curves in slow fading scenario with constant channel gain $h_i=3,-3$ with distribution function constraint $\mathcal{F}$ specified in Figure \ref{fig:ulbound}.}
  \label{fig:slowfading}
\end{figure*}
\begin{figure}[!t]
  \centering
  \includegraphics[width=3.5in]{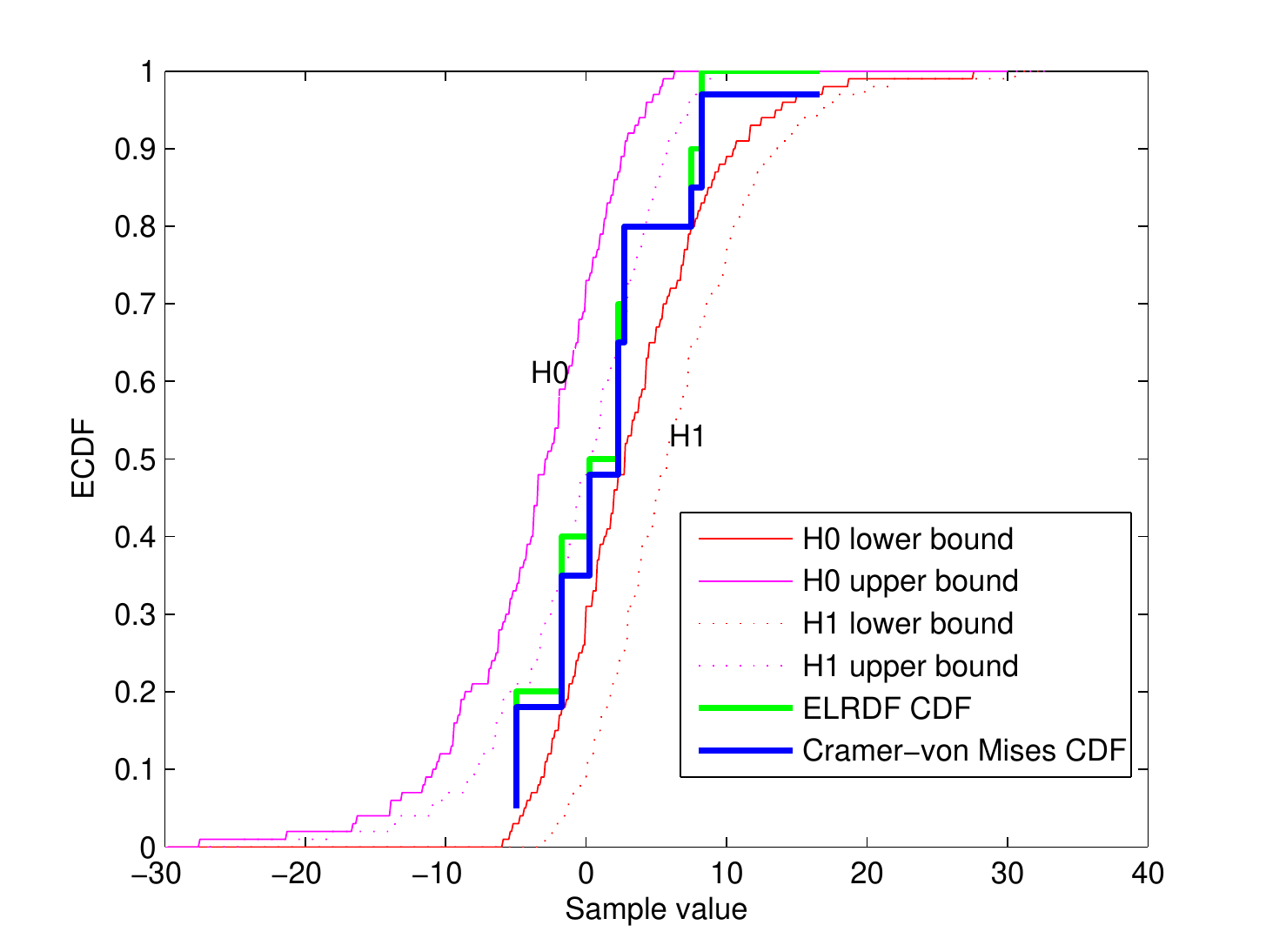}
  \caption{The ECDFs of a single run of the ELRDF and Cram\'{e}r-von Mises test when $h_i=3$ when the alternative hypothesis $\mathcal{H}_1$ is true. In this case, the true distribution is to the right side of the null hypothesis. The Cram\'{e}r-von Mises test is more close to the alternative than the ELRDF.}
  \label{fig:singlerun}
\end{figure}
\begin{figure}[!t]
  \centering
  \includegraphics[width=3.5in]{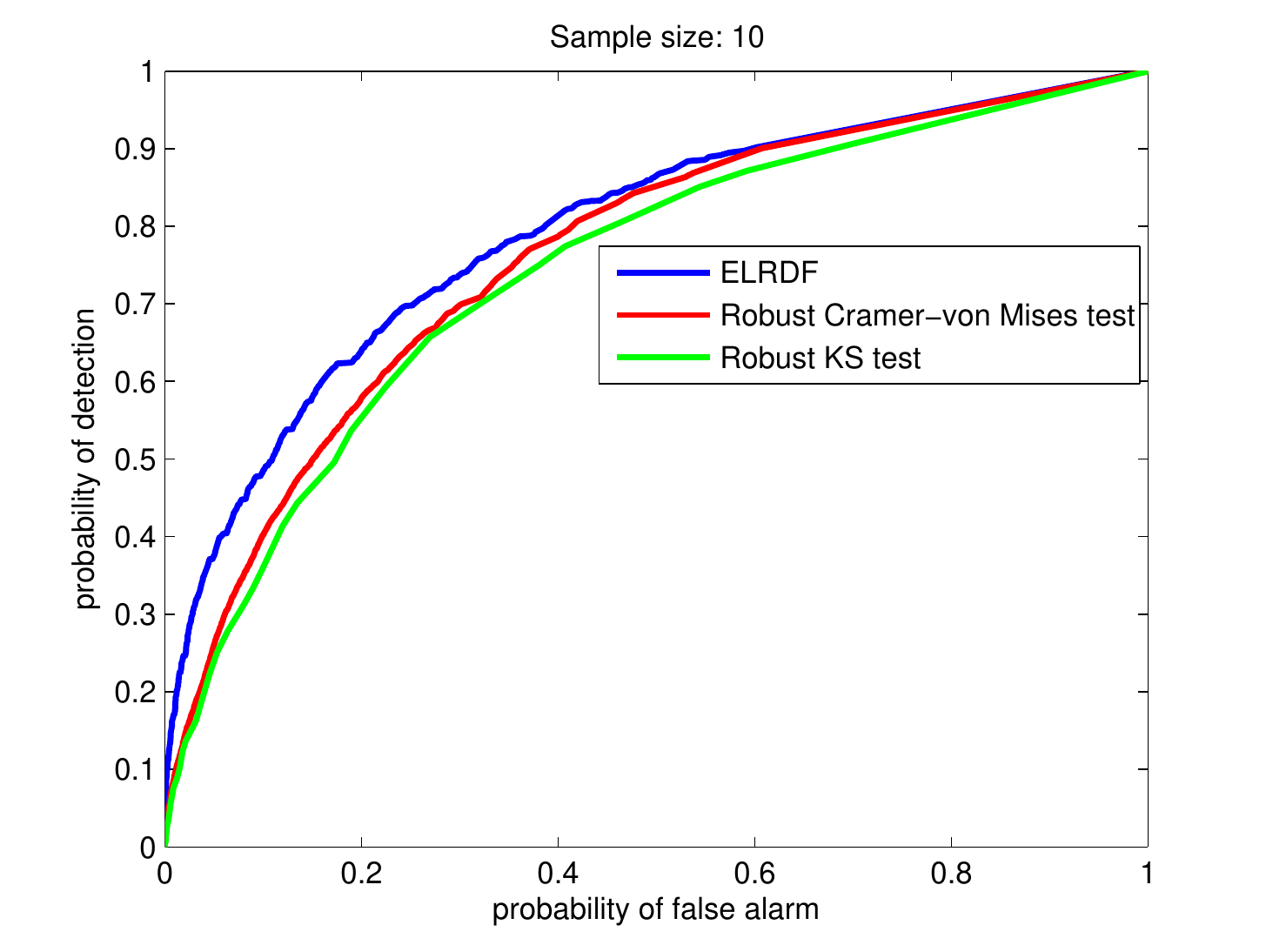}
  \caption{ROC curves in fast fading scenario with i.i.d. channel gain uniformly distributed in $[-10, 10]$ and distribution function constraint $\mathcal{F}$ specified in Figure \ref{fig:ulbound}.}
  \label{fig:fastfading}
\end{figure}
\qed
\end{example}
\begin{example}[Degenerate distribution function constraint]
We consider the same communication system in slow and fast fading environment in the previous example except that here a degenerate distribution function constraint is applied. In this case, the noise follows a non-parametric distribution which is simply the ECDF evaluated with the 5 million samples. With the degenerate constraint, the computation of ELRDF and robust Cram\'{e}r-von Mises test is much simpler. The ELRDF with sample grouping is also considered. The ROC curves in slow fading scenario with $h_i=3$ for all $i$ are presented in Figure \ref{fig:eg3p} with sample size 10, small group size of 1, 2, 5 and 10 for ELRDF. In the slow fading case, it is observed that ELRDF averaging over many groups with small number of samples gains the advantage over averaging with less number of groups but with many samples in each group. Specifically, ELRDF with averaging over 10 groups with 1 sample each outperforms all other tests, including the Cram\'{e}r-von Mises test and the KS test, while ELRDF with averaging over 2 groups with 5 samples each, and without averaging completely fail. Due to previous experiences, we also examine the performances with $h_i=-3$. In this case, the performances of the ELRDF with averaging over small number of large groups also get improved.

\par
In the fast fading scenario, a quite opposite result is observed as shown in Figure\ref{fig:eg4}. Similarly, the channel gain $h_i$, $i=1,2,\ldots,n$ is i.i.d. with uniform distribution in $[-10, 10]$. The ELRDF without averaging outperforms all other tests, including the Cram\'{e}r-von Mises test and the KS test. The ELRDF with averaging over 5 and 10 groups completely fail. With the notations used in Section \ref{sec:3}, considering averaging over 10 size 1 groups, the inability of ELRDF with averaging over many small groups can be explained as follows. When the alternative hypothesis is true, sample value $X_{i1}$ can be very large or very small. As a result, $w_{i1}$ is either close to 0 or 1. But their average $\tilde{w}_1$ is somewhere in the middle of $[0,1]$. When the null hypothesis is true, $\tilde{w}_1$ is also somewhere in the middle of $[0,1]$. Therefore it becomes difficult to separate the two hypotheses. We also notice that some of the curves fall in the lower right triangle, which is not permitted by the definition of ROC curves. To make them perform correctly, one needs to switch the roles of probability of detection and false alarm.
\begin{figure*}[!t]
  \centering
  \subfloat[$h_i=3$]{\label{fig:eg3p}\includegraphics[width=3.5in]{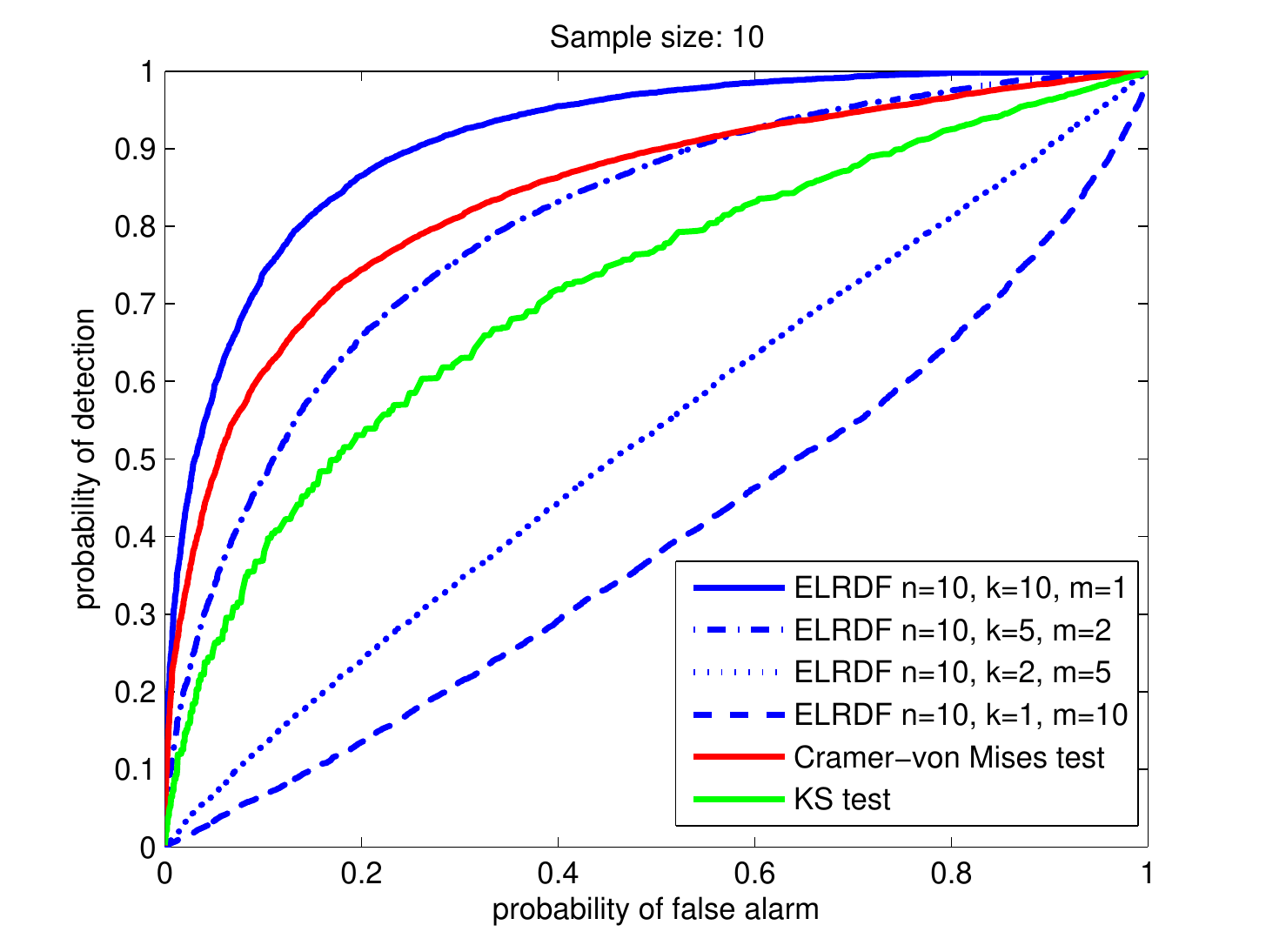}}
  \subfloat[$h_i=-3$]{\label{fig:eg3n}\includegraphics[width=3.5in]{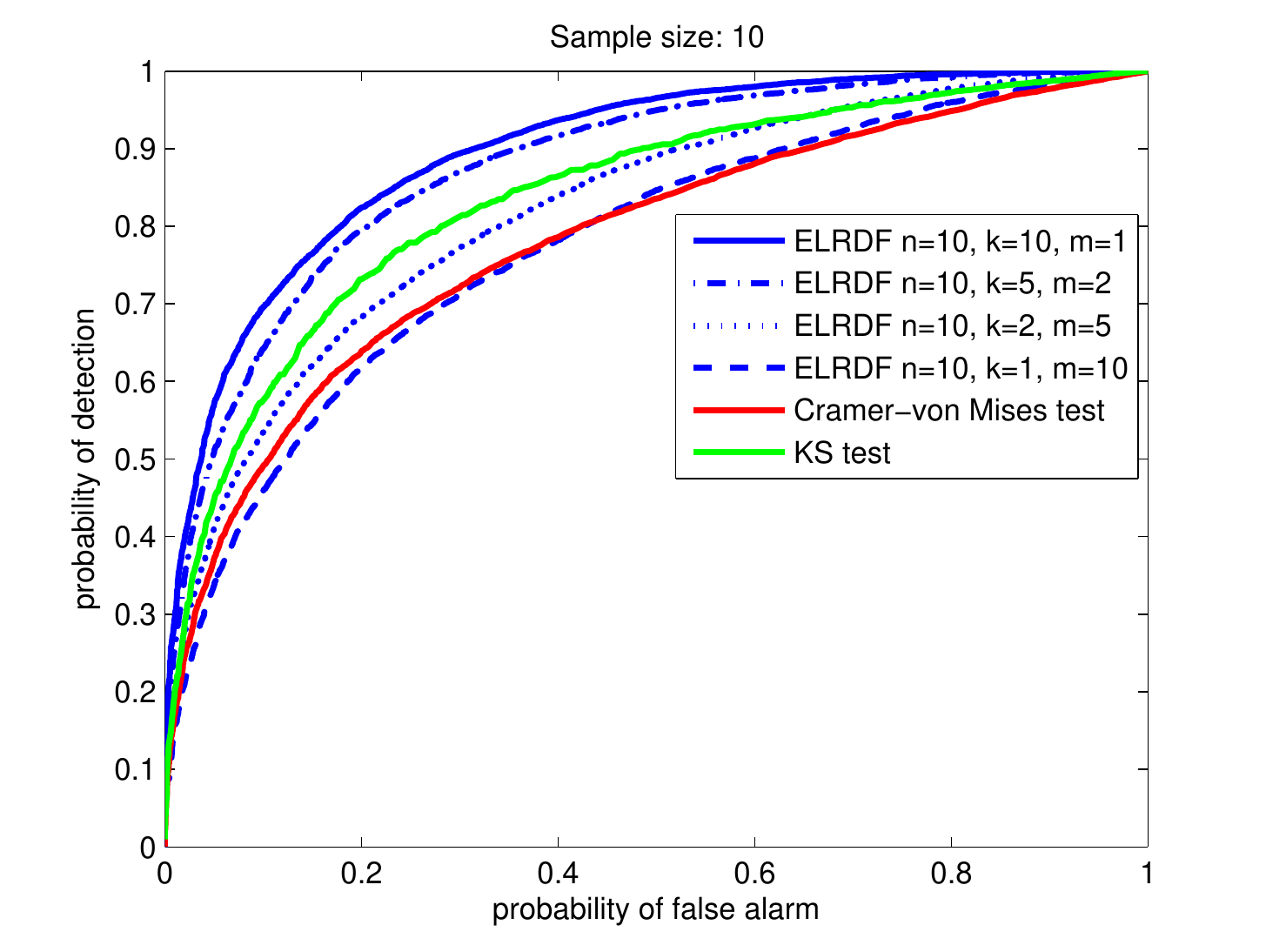}}
  \caption{ROC curves in slow fading scenario with degenerate distribution function constraint.}
  \label{fig:degperformance}
\end{figure*}
\begin{figure}[!t]
  \centering
  \includegraphics[width=3.5in]{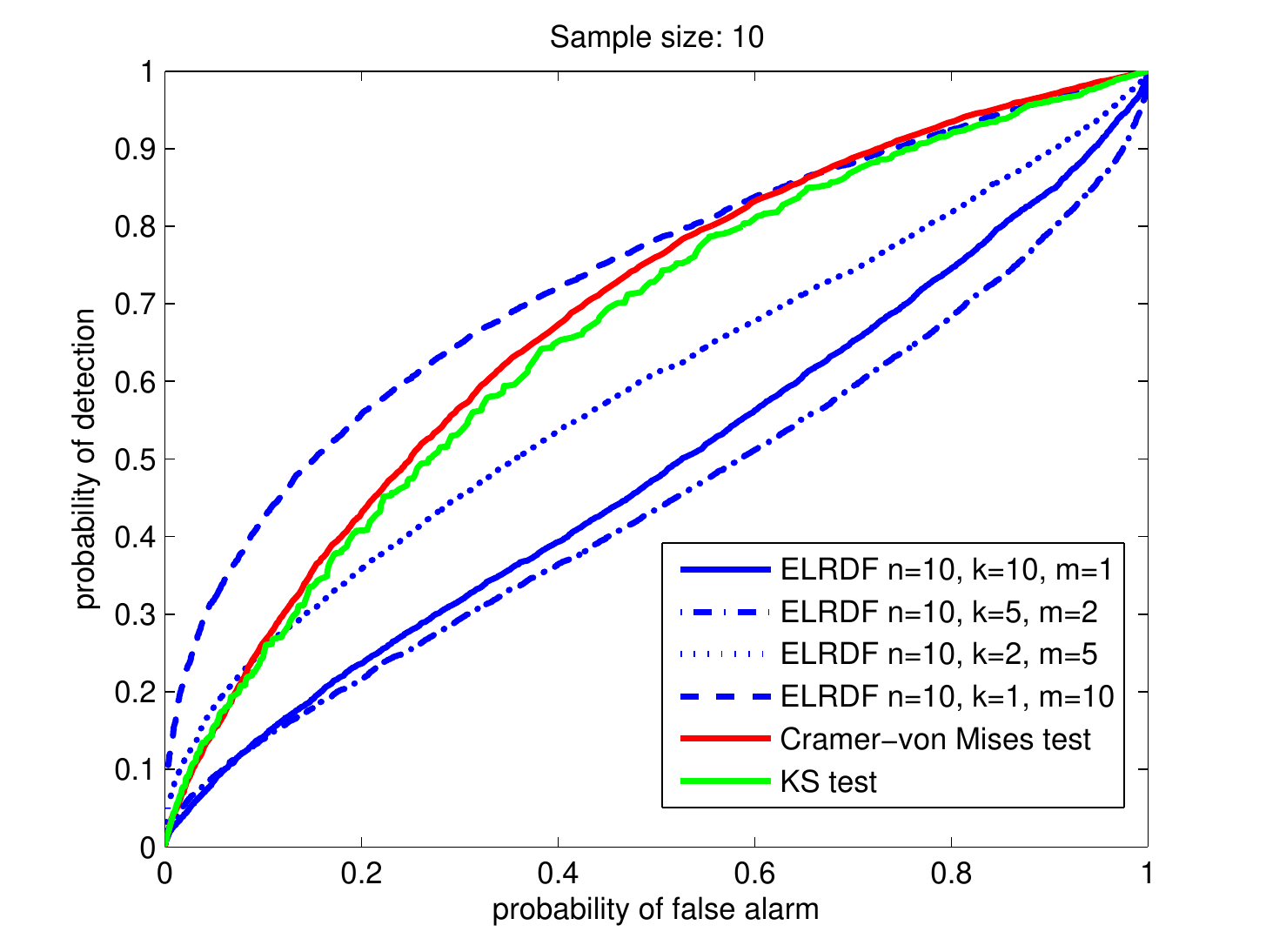}
  \caption{ROC curves in fast fading scenario with degenerate distribution function constraint.}
  \label{fig:eg4}
\end{figure}
\qed
\end{example}

\par
It is also of interest to examine the impact of the noise model on the test performances. In the two examples above, the only difference is the way the noise distribution is modeled. When there is no uncertainty and the noise distribution is stationary, the test is more accurate when more samples are used to evaluate the noise distribution. But in our case, it might be preferable to include a level of uncertainty to make the test robust. To illustrate this point, we compare the performance of the tests with the rich and degenerate distribution function constraint. Figure \ref{fig:compareslow} shows that in the slow fading scenario, the tests with degenerate constraint performs slightly better than those with rich distribution constraint when $h_i=3$. They perform quite similarly when $h_i=-3$. The ELRDF with degenerate constraint is applied with averaging over 10 groups. In the fast fading scenario, tests with a rich distribution constraint significantly outperform those with degenerate constraint where the ELRDF is applied without averaging. This result implies that uncertainty is necessary when the noise distribution is non-stationary.
\begin{figure}[!t]
  \centering
  \includegraphics[width=3.5in]{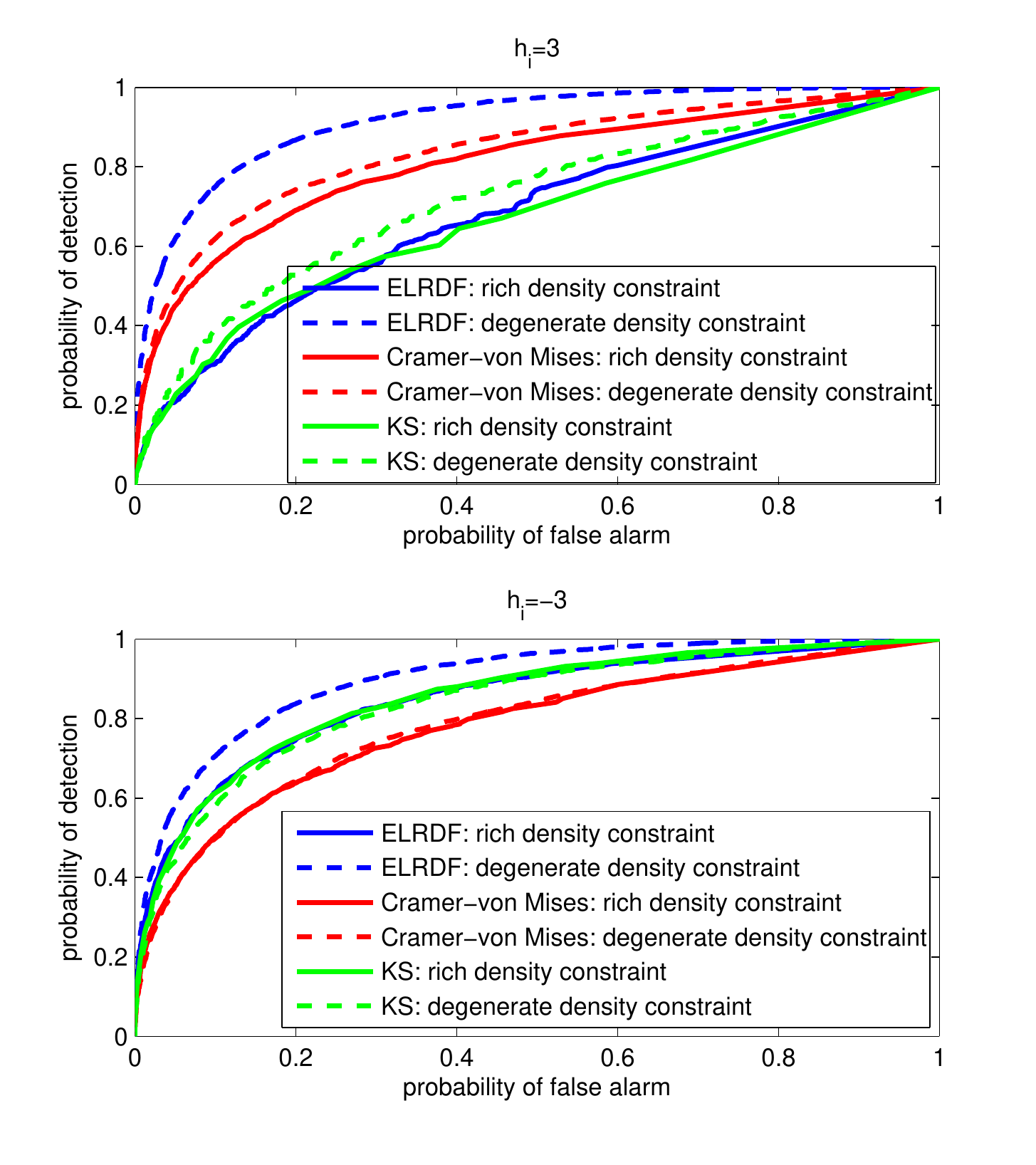}
  \caption{Comparison between rich and degenerate distribution function constraint in slow fading scenario.}
  \label{fig:compareslow}
\end{figure}
\begin{figure}[!t]
  \centering
  \includegraphics[width=3.5in]{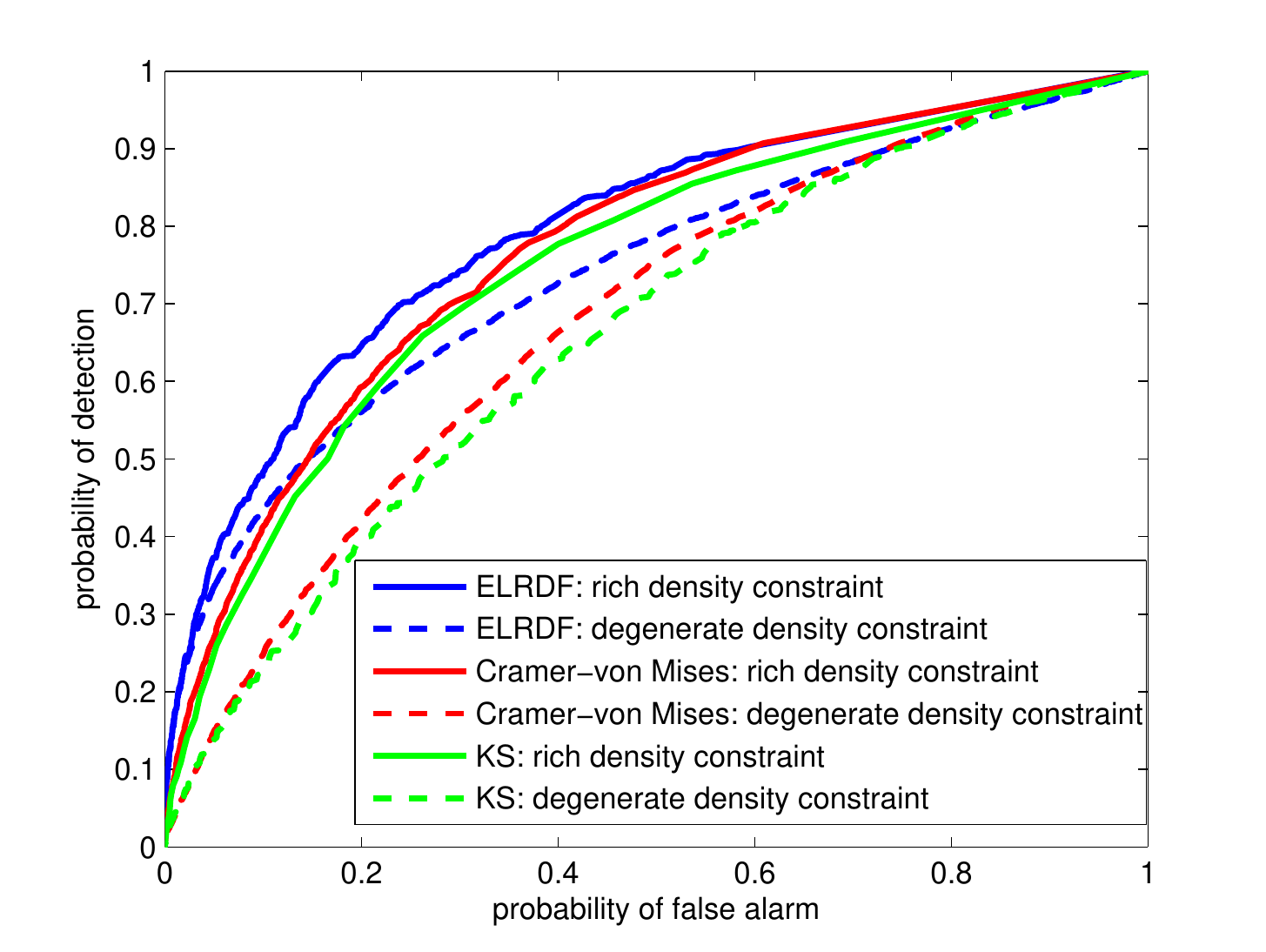}
  \caption{Comparison between rich and degenerate distribution function constraint in fast fading scenario.}
  \label{fig:comparefast}
\end{figure}

\section{Conclusion}\label{sec:6}
This work proposes a novel empirical likelihood ratio test with distribution function constraints (ELRDF), which is applicable to many applications in robust non-parametric detection problems. By providing a description of uncertainty using distribution function constraints, this test delivers better performance compared to many popular goodness-of-fit tests, such as the Kolmogorov-Smirnov test and the Cram\'{e}r-von Mises test. Also, the asymptotic optimality of the test is established. When the distribution function constrain is degenerate, the corresponding ELRDF is also devised. Several examples in communication systems are provided to show the performance of ELRDF with rich and degenerate distribution function constraints. With rich distribution function constraint, when the channel gain is a positive constant, the ELRDF is less powerful than the Cram\'{e}r-von Mises test. When it is a negative constant, ELRDF performs better. When the channel gain is random over a symmetric interval with respect to 0, ELRDF has better performance. With degenerate distribution function constraint, the ELRDF with suitable sample grouping outperforms both the Cram\'{e}r-von Mises test and the Kolmogorov-Smirnov test. We also show that robustness is necessary with our noise samples using results from the two examples.

\appendices
\section{Proof of Theorem \ref{thm1}}\label{app1}
\begin{proof}
1) By Sanov's theorem:
\begin{align}
    e_F(\Lambda)&=\lim\inf\limits_{n\rightarrow\infty}\inf\limits_{F\in\mathcal{F}}-\frac{1}{n}\log F(F_e\in\Omega_1^{\delta})\nonumber\\
    &\geq\inf\limits_{F\in\mathcal{F}}\inf\limits_{F_e\in\bar{\Lambda}_1}D(F_e||F)\geq\eta.\nonumber
\end{align}
The second inequality is from the fact that $\bar{\Lambda}_1=\{F_e:-\frac{1}{n}\log R(F^{\ast}, F_e)\geq\eta\}$ implied by the lower semicontinuity of $D(\cdot||F)$.

\par
2) This part of proof follows the technique used by Zeitouni and Gutman in \cite{zeitouni1991universal}. We first show that there exists some $n(\delta)$ such that $\Lambda_0\subseteq\Omega_0(n)$ for $n>n(\delta)$. Suppose that it is not true. Then there exists a sequence $n_k$, $k=1,2,3,\ldots$ such that $\mu_{n_k}\in\Lambda_0$ and $\mu_{n_k}\in\Omega_1(n_k)$. Since $\Lambda_0$ is compact, there exists a $\mu\in\Lambda_0$ such that $\mu_{n_k}\rightarrow\mu$. By the definition of $\Omega_1^{\delta}(n)$, $B(\mu_{n_k}, \delta)\subset\Omega_1^{\delta}(n_k)$. Then $B(\mu, \frac{1}{2}\delta)\subset\Omega_1^{\delta}(n_k)$ for infinitely many $n_k$. Then,
\begin{align}
    &\lim\inf\limits_{n\rightarrow\infty}\inf\limits_{F\in\mathcal{F}}-\frac{1}{n}\log F(F_e\in\Omega_1^{\delta}(n))\nonumber\\
    &\leq\lim\inf\limits_{n\rightarrow\infty}\inf\limits_{F\in\mathcal{F}}-\frac{1}{n}\log F(F_e\in B(\mu, \frac{1}{2}\delta))\nonumber\\
    &\leq\inf\limits_{F\in\mathcal{F}}\inf\limits_{\tilde{\mu}\in B(\mu, \frac{1}{2}\delta)}D(\tilde{\mu}||F)\nonumber\\
    &\leq\inf\limits_{F\in\mathcal{F}}D(\mu||F)\nonumber\\
    &\leq\eta.\nonumber
\end{align}
This draws a contradiction with the definition of $\Omega$. The second inequality comes from Sanov's theorem. The last inequality comes from the fact that $\mu\in\Lambda_0$. Therefore $\Lambda_0\subseteq\Omega_0(n)$ for all $n>n(\delta)$ and some $n(\delta)\in\mathbb{N}$. It follows that
\begin{align}
    &\lim\inf\limits_{n\rightarrow\infty}\inf\limits_{F\notin\mathcal{F}}-\frac{1}{n}\log F(F_e\in\Omega_0(n))\nonumber\\
    &\leq\lim\inf\limits_{n\rightarrow\infty}\inf\limits_{F\notin\mathcal{F}}-\frac{1}{n}\log F(F_e\in\Lambda_0).\nonumber
\end{align}
Hence prove the theorem.
\end{proof}

\section{Proof of Lemma \ref{lm1}}\label{app2}
\begin{proof}
One can write down the distribution density function $f_i$ for $i\geq 2$ as:
\begin{align}
    f_i(w_i)=\frac{n!}{(n-2)!}\binom{n-2}{i-2}\int_0^{1-w_i}x^{i-2}(1-w_i-x)^{n-i}dx.\nonumber
\end{align}
This expression can be interpreted as follows. There are $\frac{n!}{(n-2)!}$ permutations to pick 2 out of $n$ random variables. Then there are $\binom{n-2}{i-2}$ ways to pick $i-2$ random variables from the left $n-2$ random variables. The integral is the probability distribution evaluated at $w_i$. We further evaluate the integral.
\begin{align}
    &\int_0^{1-w_i}x^{i-2}(1-w_i-x)^{n-i}dx\nonumber\\
    =&\frac{1}{i-1}(1-w_i-x)^{n-i}x^{i-1}\Big |_0^{1-w_i}\nonumber\\
    &+\int_0^{1-w_i}\frac{n-i}{i-1}x^{i-1}(1-w_i-x)^{n-i-1}dx\nonumber\\
    =&\frac{n-i}{i-1}\int_0^{1-w_i}x^{i-1}(1-w_i-x)^{n-i-1}dx.\nonumber
\end{align}
Denote $A_i=\int_0^{1-w_i}x^{i-2}(1-w_i-x)^{n-i}dx$, then
\begin{align}
    A_{i+1}=\frac{i-1}{n-i}A_i.\nonumber
\end{align}
$A_2$ is evaluated to be
\begin{align}
    A_2&=\int_0^{1-w_i}x^{i-2}(1-w_i-x)^{n-i}dx\nonumber\\
    &=\frac{1}{n-1}(1-w_i)^{n-1}.\nonumber
\end{align}
Using $A_2$ and the aforementioned relationship, one can obtain the expression for $A_i$
\begin{align}
    A_i=\frac{(i-2)!(n-i)!}{(n-1)!}(1-w_i)^{n-1}.\nonumber
\end{align}
Substitute it in the original expression yields
\begin{align}
    f_i(w_i)=n(1-w_i)^{n-1}.\nonumber
\end{align}
One can easily obtain the same result for $i=1$. Also the mean of $W_i$ can be evaluated as $\mathbb{E}[W_i]=\frac{1}{n+1}$. Hence prove the lemma.
\end{proof}

\ifCLASSOPTIONcaptionsoff
  \newpage
\fi

\bibliographystyle{IEEEtran}
\bibliography{IEEEabrv,bibfile}

%

\begin{IEEEbiography}[{\includegraphics[width=1in,height=1.25in,clip,keepaspectratio]{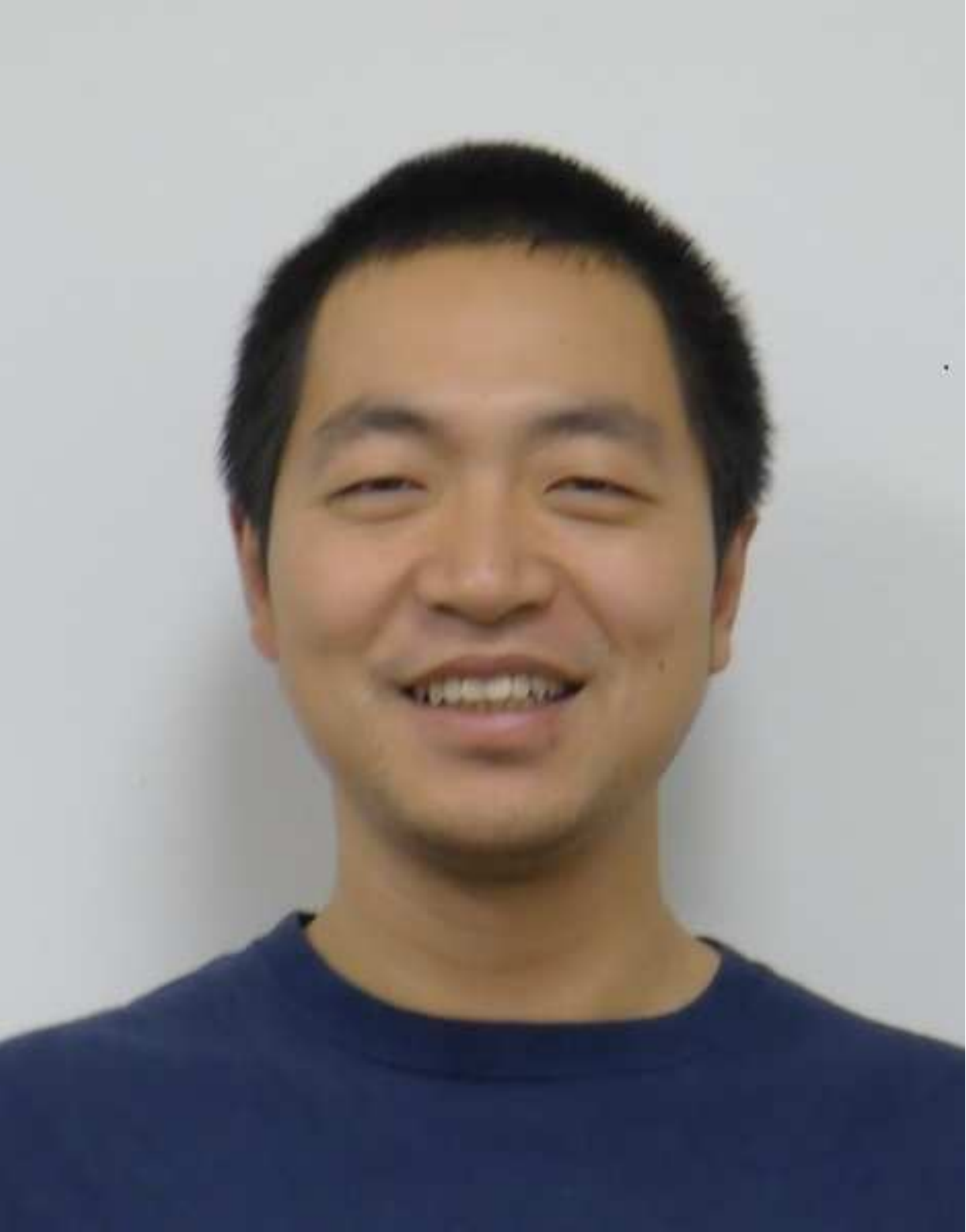}}]{Yingxi Liu}
received the B.E., M.E. degrees in electrical engineering from the Beijing University of Posts and Telecommunications (BUPT), Beijing, China, in 2005 and 2008, and the Ph.D. degree in electrical and computer engineering from the University of Texas at Austin in 2013. He worked as summer intern in Commscope, in Richardson, TX, 2011 and in Broadcom, in Sunnyvale, CA, 2012.

His research interests lie in several theoretical and practical aspects of signal processing and communication theory, including robust detection, non-parametric tests, stochastic modeling, as well as cognitive radio design and implementation.
\end{IEEEbiography}

\begin{IEEEbiography}[{\includegraphics[width=1in,height=1.25in,clip,keepaspectratio]{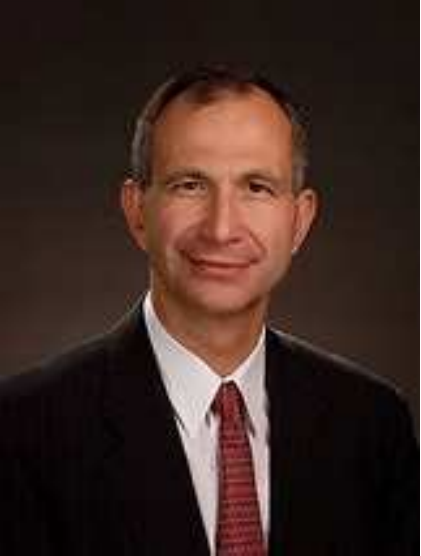}}]{Ahmed Tewfik}
received the B.Sc. degree from Cairo University, Cairo, Egypt, in 1982 and the M.Sc., E.E., and Sc.D. degrees from the Massachusetts Institute of Technology (MIT), Cambridge, MA, USA, in 1984, 1985, and 1987, respectively.

He is the Cockrell Family Regents Chair in Engineering and the Chairman of the Department of Electrical and Computer Engineering at the University of Texas, Austin, TX, USA. He was the E. F. Johnson Professor of Electronic Communications with the Department of Electrical Engineering at the University of Minnesota, Twin Cities, MN, USA, until September 2010. He worked at Alphatech, Inc. and served as a consultant to several companies. From August 1997 to August 2001, he was the President and CEO of Cognicity, Inc., an entertainment marketing software tools publisher that he co-founded, on partial leave of absence from the University of Minnesota. His current areas of research interests include medical imaging for minimally invasive surgery, programmable wireless networks, genomics and proteomics, neural prosthetics and audio signal separation. He has made seminal contributions in the past to food inspection, watermarking, multimedia signal processing and content based retrieval, wavelet signal processing and fractals.

Prof. Tewfik was a Distinguished Lecturer of the IEEE Signal Processing Society from 1997 to 1999. He received the IEEE Third Millennium Award in 2000. He was elected to the position of VP Technical Directions of the IEEE Signal Processing Society (SPS) in 2009 and served on the Board of Governors of the SPS from 2006 to 2008. He has given several plenary and keynote lectures at IEEE conferences.
\end{IEEEbiography}







\end{document}